\definecolor{mygreen}{HTML}{43a047}
\definecolor{darkgreen}{rgb}{0,0.5,0}
\def\CLinf{C(L^\infty(\Omega))}
\def\ConeLinf{C^1(L^\infty(\Omega))}
\def\CtwoLinf{C^2(L^\infty(\Omega))}
\def\calA{\mathcal{A}}
\def\calS{\mathcal{S}}
\def\calT{\mathcal{T}}
\def\eps{\varepsilon}
\def\ulR{\underline{R}}
\def\olR{\overline{R}}
\def\op{\overline{p}}
\newcommand{\Om}{\Omega}
\def\Xp{{X}_p}
\def\frakXp{\mathfrak{X}_p}
\def\BR{\mathcal{B}_R}
\def\frakBR{\mathfrak{B}_R}
\def\tR{{\overline{\mathcal{R}}}}
\def\tRs{{\overline{\mathcal{R}}}^*}
\def\tdelta{\tilde{\delta}}
\def\tdeltap{\tilde{\delta}_p}
\def \Dtalpha{\textup{D}_t^\alpha}
\newcommand{\bmass}{\mathcal{m}}
\newcommand{\tspace}{\hspace{0.03cm}}
\def\pone{p^{(1)}}
\def\ptwo{p^{(2)}}
\def\fone{f_1}
\def\ftwo{f_2}
\def\frakS{\mathfrak{S}}
\def\frakXp{\mathfrak{X}_p}
\def\frakT{\mathfrak{T}}
\newcommand{\Rt}{R_t}
\newcommand{\Rtt}{R_{tt}}
\newcommand{\pstat}{p_{\textup{stat}}}
\newcommand{\pint}{p_{\textup{int}}}
\newcommand{\pext}{p_{\textup{ext}}}
\newcommand{\pv}{p_{\textup{v}}}
\newcommand{\pb}{p_{\textup{b}}}
\newcommand{\ppgn}{p_{\textup{pgn}}}
\newcommand{\Rn}{R_0}
\newcommand{\Req}{R_0}
\newcommand{\Rs}{R^*}
\newcommand{\Rsone}{R^{(1),*}}
\newcommand{\Rstwo}{R^{(2),*}}
\newcommand{\Rst}{R^*_t}
\newcommand{\Rstt}{R^*_{tt}}
\newcommand{\pt}{p_t}
\newcommand{\Dt}{\textup{D}_t}
\def\ts{\tilde{s}}
\def\dtsds{\, \textup{d}\tilde{s}\textup{d}s}
\def\hzero{h_0}
\def\rhoinv{\tfrac{1}{\rho_0}}
\newcommand{\ds}{\, \textup{d} s }
\newcommand{\dxs}{\, \textup{d}x\textup{d}s}
\newcommand{\inttO}{\int_0^t \int_{\Omega}}
\newcommand{\intO}{\int_{\Omega}}
\def\intt{\int_0^t}
\def\bfu{\boldsymbol{u}}
\newcommand{\deltap}{\delta_p}
\newcommand{\deltaR}{\delta_R}
\newcommand{\R}{\mathbb{R}} 
\newcommand{\N}{\mathbb{N}} 
\newcommand{\Ltwo}{L^2(\Omega)}
\newcommand{\Linf}{L^\infty(\Omega)}
\newcommand{\Hone}{H^1(\Omega)}
\newcommand{\Htwo}{H^2(\Omega)}
\newcommand{\Hthree}{H^3(\Omega)}
\newcommand{\Honezero}{H_0^1(\Omega)}
\newcommand{\Honetwo}{{H_\diamondsuit^2(\Omega)}}
\newcommand{\Honethree}{{H_\diamondsuit^3(\Omega)}}
\newcommand{\LtwoTLtwo}{L^2(0,T; L^2(\Omega))}
\def\LtwoLtwo{L^2(\Ltwo)}
\def\LinfLinf{L^\infty(L^\infty(\Omega))}
\patchcmd{\thenomenclature}{\section*}{\section*}{}{}
\definecolor{grey}{rgb}{0.5,0.5,0.5}
\newcommand{\vanja}[1]{{\textcolor{black}{ #1}}}
\tikzset{
	node on line/.style={
		to path={
			\pgfextra{%
				\edef\tikz@temp{
					edge[
					line to, path only, 
					every edge quotes/.append style={auto=false},
					nodes={alias=@nodeonline@}]
					coordinate(@nodeonline@)
					\unexpanded\expandafter{\tikz@tonodes}(\tikztotarget)
				}\expandafter
			}\tikz@temp
			-- (@nodeonline@) -- (\tikztotarget)}}}
\tikzset{global scale/.style={
		scale=#1,
		every node/.style={scale=#1}
	}
}
\crefname{hypothesis}{Hypothesis}{Hypotheses}
\title{Mathematical models for nonlinear ultrasound contrast imaging with microbubbles}
\author{Vanja Nikoli\'c\thanks{Department of Mathematics,
		Radboud University,      
		Heyendaalseweg 135,    
		6525 AJ Nijmegen, The Netherlands (\href{vanja.nikolic@ru.nl}{vanja.nikolic@ru.nl}).}
\and Teresa Rauscher\thanks{Department of Mathematics, 
	University of Klagenfurt, Universit\"atsstra\ss e 65--67, A-9020 Klagenfurt, Austria (\href{teresa.rauscher@aau.at}{teresa.rauscher@aau.at}).}}
\begin{document}
	\maketitle  
	   
	\begin{abstract}
		Ultrasound contrast imaging is a specialized imaging technique that applies microbubble contrast agents to traditional medical sonography, providing real-time visualization of blood flow and vessels. Gas-filled microbubbles are injected into the body, where they undergo compression and rarefaction and interact nonlinearly with the ultrasound waves. Therefore, the propagation of sound through a bubbly liquid is a strongly nonlinear problem that can be modeled by a nonlinear acoustic wave equation for the propagation of the pressure waves coupled via the source terms to a nonlinear ordinary differential equation of Rayleigh--Plesset type  for the bubble dynamics. In this work, we first derive a hierarchy of such coupled models based on constitutive laws. We then focus on the coupling of Westervelt's acoustic equation to Rayleigh--Plesset type equations, where we rigorously show the existence of solutions locally in time under suitable conditions on the initial pressure-microbubble data and final time. Thirdly, we devise and discuss numerical experiments on both single-bubble dynamics and the interaction of microbubbles with ultrasound waves. 
	\end{abstract}

\begin{keywords}
	 ultrasound contrast imaging, Westervelt's equation, Rayleigh--Plesset equation, wave-ODE coupling
\end{keywords}

\begin{MSCcodes}
	35L05, 35L72, 34A34
\end{MSCcodes}
\section{Introduction}
		Ultrasound imaging is a crucial tool in medical diagnostics due to its non-invasive nature, real-time capability, and versatility. However, it has limitations in terms of precision. To address this issue, contrast agents based on microbubbles were introduced. These agents consist of microbubbles with a gas core encapsulated by a shell that resonates and produces echoes when exposed to ultrasound waves. This technique, known as \emph{ultrasound contrast imaging}, shows strong nonlinear effects due to the high-frequency pressure waves in the tissue, the interaction of sound waves with the bubbles, and the behavior of the microbubbles themselves. The bubbles can form, expand, or collapse -- a process known as acoustic cavitation. Acoustic cavitation can be non-inertial, involving stable periodic oscillations, or inertial, involving rapid growth and violent collapse of the bubbles. To minimize potential tissue damage, inertial cavitation is typically avoided.  Properly managing pressure in the focal region is critical to optimizing the imaging process, that is, ensuring clear images while maintaining safety and effectiveness in ultrasound contrast imaging technologies. In particular, monitoring the nonlinear effects can help maintain the balance between achieving high-quality imaging and minimizing unwanted cavitation effects.\\
	\indent With this motivation in mind, in this work we investigate the complex nonlinear problem of ultrasound contrast imaging from a mathematical side, focusing on non-inertial cavitation. \vanja{The problem of modeling and simulation of ultrasound contrast agents has recently gained attention; see, for example,~\cite{matalliotakis2023computation, matalliotakis2024impact, blanken2024proteus} and the references contained therein.}  Our main contributions in this regard are threefold. They pertain to (i) the derivation of the coupled nonlinear models of ultrasound-microbubble interactions, (ii) their rigorous mathematical analysis, and (iii) their numerical \vanja{study}. \\
	\indent The models, derived in Section~\ref{Sec:Modeling} starting from constitutive laws, are based on a volume coupling of a PDE for the propagation of ultrasound waves with an ODE for the microbubble dynamics. Among them is the system based on the damped Westervelt's equation for the acoustic pressure $p=p(x,t)$ in bubbly media. This wave equation is given by
	\begin{equation} \label{West_eq}
		\begin{aligned}
			((1+ 2k(x)p) p_t)_t - c^2 \Delta p -b \Delta \pt=  c^2\eta v_{tt},
		\end{aligned}
	\end{equation}
	where $v= \frac43 \pi R^3$ in the source term denotes the volume of the microbubble with radius $R$; see Section~\ref{Sec:Modeling} for details. Westervelt's equation 
	is capable of capturing prominent nonlinear effects in acoustic wave propagation, such as the steepening of the wave front and generation of higher harmonics. In \eqref{West_eq}, $c>0$ is the speed of sound in the medium, $b$ the sound diffusivity, and the coefficient $\eta$ is related to the number of microbubbles per unit volume. The nonlinearity coefficient  $k=k(x)$  is allowed to vary in space, as relevant for imaging applications such as \emph{acoustic nonlinearity tomography}~\cite{kaltenbacher_rundell2021, kaltenbacher2022inverse}. \\ 
 \indent When coupling \eqref{West_eq} to an equation for the changes in the radius $R$ of the microbubbles, we have various established models to choose from, commonly referred to as Rayleigh--Plesset equations; see, e.g.,~\cite{lauterborn1999nonlinear, wolfrum2004cavitation, vokurka1986comparison, versluis2020ultrasound, doinikov2011review} and the references provided therein. These equations are derived under the assumptions that the bubble size is much smaller than the pressure wavelength, which, in turn, means we may assume that  microbubbles remain spherical as they oscillate. The equations describe the variation of the bubble radius $R$ and can be expressed in the unified form
	\begin{equation} \label{general_bubble_ODE}
		\begin{aligned}
			\rho_{l0}  \left[ R R_{tt} + \tfrac{3}{2} R_t^2 \right]	=\, \pint-\pext,
		\end{aligned}
	\end{equation}
	where $\pint$ is the internal and $\pext$ external pressure. The latter involves the acoustic input $p$, dictated by \eqref{West_eq}, and influences the growth and near collapse of the bubble. The coefficient $ \rho_{l0}$ in \eqref{general_bubble_ODE} is the mean mass density of the liquid. Depending on the effects they capture, different forms of $\pint=\pint(R, \Rt)$ and $\pext=\pext(R, \Rt, p)$ arise in the literature and lead to different versions of this ODE. We discuss them in detail in Section~\ref{Sec:Modeling}. In the analysis, we consider an equation for the dynamics of microbubbles with a generalized right-hand side that covers these various cases of practical interest; see Section~\ref{Sec:Analysis}. In the simulations in Section \ref{Sec:Numerics}, we focus on two concrete versions of Rayleigh--Plesset equations, where we examine both single-bubble dynamics under a sinusoidal driving pressure and the ultrasound-bubble interaction.
	\subsection*{Related literature and novelty} 
	To the best of our knowledge, \vanja{this work contains} the first \vanja{rigorous well-posedness study} of models of ultrasound-microbubble interactions. In general, mathematical literature on wave-ODE systems is quite limited. For our theoretical purposes, some helpful ideas can be sourced from other studies of PDE-ODE systems involving the Rayleigh--Plesset equations, in particular~\cite{biro2000analysis} and~\cite{vodak2018mathematical}. \\
	\indent The mathematical literature on nonlinear acoustic equations for non-bubbly media is, on the other hand, by now quite rich, especially when it comes to the Westervelt equation. In the presence of strong damping (that is, when $b>0$), this equation has a parabolic-like character and  its global behavior has been established rigorously in different settings in terms of data; we refer to~\cite{kaltenbacher2009global, meyer2011optimal} for the analysis in the presence of homogeneous Dirichlet conditions.  As the damping coefficient $b$ is relatively small in practice, the limiting behavior of nonlinear acoustic models as this parameter vanishes has also been investigated.  A $b$-uniform well-posedness and the related convergence analysis can be found  in~\cite{kaltenbacher2022parabolic}. Refined versions of nonlinear acoustic models for non-bubbly media, such as the Kuznetsov equation, have also been thoroughly investigated in the mathematical literature; see, for example,~\cite{mizohata1993global, dekkers2017cauchy}. For a detailed overview of mathematical research in nonlinear acoustics, we refer to the review paper~\cite{kaltenbacherMathematicsNonlinearAcoustics2015}. 
	
	\section{Derivation of nonlinear acoustic models in bubbly media} \label{Sec:Modeling}
	
	In this section, we derive a hierarchy of second-order wave equations for the propagation of sound through a bubbly media starting from a governing system of equations and then couple them to models of microbubble dynamics. To this end, we introduce the following quantities, which are decomposed into their mean and alternating part denoted by $\cdot_0$ and $\cdot'$, respectively:
	\begin{equation}\label{acoustic field}
		\begin{aligned}
			\text{density of the mixture:} \quad  & \rho = \rho_0 + \rho', &
			\text{density of the liquid:} \quad  & \rho_l = \rho_{l0} + \rho_l' , \\
			\text{pressure of the liquid:} \quad  & p = p_0 + p',  &
			\text{gas volume fraction:} \quad  &\bmass =\bmass_0 +\bmass' ,\\
			\text{bubble number density:} \quad  & n = n_0+n', &
			\text{bubble radius:} \quad  & R=R_0 + R', \\
			\text{liquid and gas velocity:} \quad  & \bfu = \bfu'.
		\end{aligned}
	\end{equation}
	In our setting the bubbles are assumed small and densely distributed such that they determine the properties of a continuum mixture of density $\rho$. The volume concentration of microbubbles is assumed to be uniform. As $\bmass$ is the gas volume fraction, that is the fraction of unit volume of mixture occupied by microbubbles, we have 
	\begin{equation}\label{eq: bfmass}
		\bmass=n v \ \text{ with } \ v=\frac{4}{3}\pi R^3,
	\end{equation}
	where $n$ is the bubble number density, or more precisely the concentration of bubbles per unit volume, and $R$ the bubble radius. Then the continuum density is given by
	\begin{equation} \label{eq: continuum density}
		\rho = (1-\bmass) \rho_l +\bmass \rho_g,
	\end{equation}
	where $\rho_l$ and $\rho_g$ are the liquid and gas densities in a suspension of gas bubbles in liquid, respectively. We assume that the mean pressure $p_0$ is the same in bubbles and liquid and that all bubbles have the same equilibrium radius $R_0$. 
	\subsection{Approach in the derivation} In the upcoming derivation, we adopt the approach of Crighton described in~\cite{Crighton1991}, following Lighthill's scheme introduced in~\cite{lighthill1956viscosity} and elaborated by Blackstock in ~\cite{blackstockApproximateEquationsGoverning1963}; see also \cite[Ch.\ 5]{hamilton1998nonlinear}. According to this approach, we distinguish three categories of contributions:
	\begin{itemize}
		\item First order. First-order contributions are terms that are linear with respect to the fluctuating quantities $(\cdot)'$ and do not relate to dissipative effects.
		\item Second order. Second-order contributions are linear dissipative terms and terms that are quadratic with respect to fluctuating quantities.
		\item Higher order. All remaining terms are considered to be of higher order. 
	\end{itemize}
	The fundamental rule in Lighthill's approximation scheme is that one should keep first- and second-order terms while higher order contributions are neglected. A further approximation rule that we make use of below is the \emph{substitution corollary} that permits us to substitute each term with second- or higher-order terms with its first-order approximation. In addition, we assume in this section that the fluctuating quantities are zero at initial time.

	\subsection{Derivation of acoustic equations in bubbly liquids}
	
	The acoustic field is fully described by the conservation of mass for the mixture
	\begin{equation}\label{eq: conservation of mass}
		\rho_t + \nabla \cdot ( \rho \bfu) = 0,
	\end{equation}
	the conservation of momentum for the mixture
	\begin{equation} \label{eq: conservation of momentum}
		\rho \left( \bfu_t + \left( \bfu \cdot \nabla\right)  \bfu\right) + \nabla p = \mu \Delta \bfu + \left( \frac{\mu}{3} + \mu_b \right) \nabla \left( \nabla \cdot \bfu \right),
	\end{equation}
	where $\mu$ is the shear viscosity and $\mu_b$ the bulk viscosity in the medium, the conservation of bubbles 
	\begin{equation} \label{eq: conservation of bubbles}
		n_t + \nabla \cdot ( n \bfu ) = 0,
	\end{equation}
	and the state equation
	\begin{equation} \label{eq: state equation}
		\rho_l' = \frac{p'}{c^2} - \frac{1}{\rho_{l0} c^4 } \frac{B}{2A} p'^2- \frac{\gamma }{\rho_{l0} c^4 } \left( \frac{1}{c_v} - \frac{1}{c_p} \right) p'_t,
	\end{equation}
	where $c$ is the speed of sound, $\frac{B}{A}$ denotes the parameter of nonlinearity and $\gamma$ the adiabatic exponent. Further, $c_v$ and $c_p$ are the specific heat capacity at constant volume and constant pressure, respectively.\\
	\indent The gas contribution $\bmass \rho_g$ to $\rho$ is relatively small and can therefore be neglected (following, e.g.,~\cite[Sec.\ 4]{Crighton1991}), such that \eqref{eq: continuum density} reduces to
	\begin{equation} \label{eq: continuum density l}
		\rho = (1-\bmass) \rho_l, 
	\end{equation}
	and thus we have $\rho_0 = (1-\bmass_0) \rho_{l0}$ and $\rho'=(1-\bmass_0) \rho_l' - \frac{\bmass' \rho_0}{1-\bmass_0}.$
	The gas contribution of the pressure will be incorporated in the ODE for the bubble radius.
	Recalling that the gas volume fraction is $\bmass = nv$, its mean and alternating parts are given by $	\bmass_0= n_0 v_0$ and $\bmass'= n_0 v' + v_0 n' + n'v'$, respectively.

	\indent Equations \eqref{eq: bfmass}, \eqref{eq: conservation of mass}, \eqref{eq: conservation of momentum}, \eqref{eq: conservation of bubbles}, \eqref{eq: state equation}, and \eqref{eq: continuum density l} together with an ODE for the bubble radius are a set of seven equations for ($\rho',\, \rho_l',\, p',\, \bmass',\, \bfu,\, n',\, R'$). Now, the goal is to approximate this system of equations by one wave equation for the pressure $p'$ and equip it with a Rayleigh--Plesset equation for the radius $R$ of microbubbles to altogether arrive at a coupled system that describes the propagation of sound through a bubbly liquid. \\
	\indent  We start by noting that below we only use the linearized approximation of equation \eqref{eq: conservation of bubbles} for the conservation of microbubbles; see \eqref{eq: bubbles linear}. We next proceed by approximating other constitutive equations and then combining them.
	
	\subsubsection{Simplifying the conservation of momentum equation} Consider equation \eqref{eq: conservation of momentum}. Recalling the following vector identities on a convex domain:
	\begin{equation}
		\left( \bfu \cdot \nabla \right) \bfu = \tfrac{1}{2} \nabla (\bfu \cdot \bfu) - \bfu \times \nabla \times \bfu, \qquad \nabla \left( \nabla \cdot \bfu \right) = \Delta \bfu + \nabla \times \nabla \times \bfu,
	\end{equation}
	and substituting \eqref{eq: continuum density l} into \eqref{eq: conservation of momentum}, we obtain 
	\begin{equation}
		(1-\bmass) \rho_l \left( \bfu_t + \tfrac{1}{2} \nabla \left( \bfu \cdot \bfu\right) \right) + \nabla p = \left( \tfrac{4}{3} \mu + \mu_b \right) \Delta \bfu,
	\end{equation}
	where certain terms on the right-hand side are neglected as they decay exponentially away from boundaries and eventually become small compared to the corresponding first- and second-order terms, according to \cite{hamilton1998nonlinear}. 
	The decomposition of the quantities into their mean and fluctuating parts, and recalling that $\nabla p_0=0$, leads to
	\begin{equation} \label{eq: momentum2}
		\begin{aligned}
			\left( 1 -\bmass_0\right)  \rho_{l0} \bfu_t + \nabla p'  =& \begin{multlined}[t] \rho_{l0}\bmass' \bfu_t - \left( 1 -\bmass_0\right)  \rho_{l}' \bfu_t \\
				- \tfrac{(1-\bmass_0)}{2} \rho_{l0} \nabla \left( \bfu \cdot \bfu \right)+\left( \tfrac{4}{3} \mu + \mu_b \right) \Delta \bfu. \end{multlined}
		\end{aligned}
	\end{equation}
	Here the first-order terms appear on the left hand-side, second-order terms are collected on the right-hand side and higher-order terms have been neglected. \\
	\indent Next, we make use of the substitution corollary, which means that we are allowed to substitute any physical quantity in a second-order term by its linear approximation since the resulting errors will be of third order. More precisely, we use the following:
	\begin{align}
		\text{linear state equation:} \quad & \rho_l' = \tfrac{1}{c^2} p' \label{eq: state linear}\\
		\text{linear momentum equation:} \quad & \rho_0  \bfu_t + \nabla p'=0 \label{eq: momentum linear}\\
		\text{linear continuity equation:} \quad & \nabla \cdot \bfu=- \tfrac{1}{\rho_0} \rho'_t =  - \tfrac{1-\bmass_0}{\rho_0 c^2} p'_t + \tfrac{1}{1-\bmass_0} \bmass'_t \label{eg: continuity linear} \\
		\text{linear conservation of bubbles:} \quad & \nabla \cdot \bfu =- \tfrac{1}{n_0} 
		n'_t \label{eq: bubbles linear} 
	\end{align}
	as well as the identity $p' p'_t = \frac{1}{2} (p'^2)_t$. The terms on the right hand-side of equation \eqref{eq: momentum2} can therefore be approximated as follows:
	\begin{align}
		\rho_{l0}\bmass' \bfu_t &\approx -\tfrac{\rho_{l0}}{\rho_0}\bmass' \nabla p' = - \tfrac{1}{1-\bmass_0}\bmass' \nabla p',\\
		- \left( 1 -\bmass_0\right)  \rho_{l}' \bfu_t & \approx - \tfrac{1-\bmass_0}{c^2} p' \bfu_t= \tfrac{1-\bmass_0}{\rho_0 c^2} p' \nabla p' = \tfrac{1-\bmass_0}{2 \rho_0 c^2} \nabla p'^2, \\
		\left( \tfrac{4}{3} \mu + \mu_b \right) \Delta \bfu & \approx	\left( \tfrac{4}{3} \mu + \mu_b \right) \left( - \tfrac{1}{n_0} \nabla n'_t \right),
	\end{align}
	and we can also rely on the fact that $	- \frac{1-\bmass_0}{2} \rho_{l0} \nabla \left(\bfu \cdot \bfu \right) =- \frac{\rho_0}{2} \nabla \left( \bfu \cdot \bfu \right)$.
	In this manner, from \eqref{eq: momentum2} we arrive at its approximate version given by
	\begin{equation}
		\begin{aligned} \label{eq: momentum3}
			\rho_{l0} \bfu_t + \nabla p'  = & \begin{multlined}[t]- \tfrac{1}{1-\bmass_0}\bmass' \nabla p' +\tfrac{1-\bmass_0}{2 \rho_0 c^2} \nabla p'^2  
				- \tfrac{\rho_{l0}}{2} \nabla \left(\bfu \cdot \bfu \right) -	\tfrac{ \tfrac{4}{3} \mu + \mu_b }{n_0} \nabla n'_t. \end{multlined}
		\end{aligned}
	\end{equation}
	
	\subsubsection{Simplifying the conservation of mass equation} Now, let us apply a similar procedure to \eqref{eq: conservation of mass}. We substitute \eqref{eq: continuum density l} into \eqref{eq: conservation of mass} to obtain
	\begin{equation}
		\left( (1-\bmass)\rho_l \right)_t + \nabla \cdot \left( (1-\bmass) \rho_l \bfu \right)=0.
	\end{equation}
	The decomposition for the quantities into their mean and fluctuating parts yields
	\begin{equation}
		\begin{aligned}
			- \rho_{l0} \bmass'_t + (1-\bmass_0) \rho_{l \tspace t}' + (1-\bmass_0) \rho_{l0} \nabla \cdot \bfu = &\rho_l' \bmass'_t +\bmass' \rho'_{l \tspace t} + \rho_{l0} \bfu \cdot \nabla\bmass' \\
			- (1-\bmass_0) \bfu \cdot \nabla \rho_l' &  - (1-\bmass_0)\rho_l' \nabla \cdot \bfu + \rho_{l0}\bmass' \nabla \cdot \bfu,
		\end{aligned}
	\end{equation}
	where the first-order terms are on the left hand-side, second-order terms are collected on the right-hand side and higher-order terms have been neglected, similarly to before. By using linear approximations \eqref{eq: state linear}--\eqref{eq: bubbles linear}, we approximate these terms as follows:
	\begin{align}
		\rho_l' \bmass'_t &\approx \tfrac{1}{c^2} p'\bmass'_t, \qquad \qquad	\bmass' \rho_{l \tspace t}' \approx \tfrac{1}{c^2}\bmass' p'_t,\\
		\rho_{l0} \bfu \cdot \nabla\bmass' &\approx \tfrac{\rho_0}{1-\bmass_0} \bfu \cdot \nabla\bmass', \\
		- (1-\bmass_0)\bfu \cdot \nabla \rho_l' &\approx- \tfrac{1-\bmass_0}{c^2}\bfu \cdot \nabla p' =\tfrac{\rho_0 (1-\bmass_0)}{2 c^2} (\bfu \cdot \bfu)_t , \\
		- (1-\bmass_0)\rho_l' \nabla \cdot \bfu &\approx \tfrac{1-\bmass_0}{c^2} p' \left( \tfrac{1-\bmass_0}{\rho_0 c^2} p'_t- \tfrac{1}{1-\bmass_0} \bmass'_t \right) = \tfrac{(1-\bmass_0)^2}{2 \rho_0 c^4} (p'^2)_t - \tfrac{1}{c^2} p' \bmass'_t, \\
		\rho_{l0}\bmass' \nabla \cdot \bfu &\approx \rho_{l0}\bmass' \left( - \tfrac{1-\bmass_0}{\rho_0 c^2} p'_t + \tfrac{1}{1-\bmass_0} \bmass'_t \right) = \tfrac{\rho_0}{2(1-\bmass_0)^2} (\bmass'^2)_t  - \tfrac{1}{c^2}\bmass' p'_t .
	\end{align}
	Altogether, we arrive at
	\begin{equation} \label{eq: cons_mass2}
		\begin{aligned} 
			-&\tfrac{\rho_0}{1-\bmass_0} \bmass'_t + (1-\bmass_0) \rho_{l \tspace t}' + \rho_0 \nabla \cdot \bfu \\& =\tfrac{\rho_0}{(1-\bmass_0)} \bfu \cdot \nabla\bmass' + \tfrac{\rho_0 (1-\bmass_0)}{2 c^2}  (\bfu \cdot \bfu)_t 
			 + \tfrac{(1-\bmass_0)^2}{2 \rho_0 c^4} (p'^2)_t + \tfrac{\rho_0}{2(1-\bmass_0)^2} (\bmass'^2)_t.
		\end{aligned}	
	\end{equation}
	By substituting the nonlinear state equation \eqref{eq: state equation} into \eqref{eq: cons_mass2}, we obtain the following approximate version of \eqref{eq: continuum density}:
	\begin{equation}\label{eq: mass2}
		\begin{aligned} 
			- \tfrac{\rho_0}{1-\bmass_0} \bmass'_t + & \tfrac{1-\bmass_0}{c^2} p'_t - \tfrac{(1-\bmass_0)^2}{\rho_0 c^4} \tfrac{B}{2A} (p'^2)_t - \tfrac{\gamma (1-\bmass_0)^2}{\rho_0 c^4} \left( \tfrac{1}{c_v} - \tfrac{1}{c_p} \right) p'_{tt} \\
			=&  - \rho_0 \nabla \cdot \bfu + \tfrac{(1-\bmass_0)^2}{2 \rho_0 c^4} (p'^2)_t + \tfrac{\rho_0}{2(1-\bmass_0)^2}(\bmass'^2)_t \\
			&+\tfrac{\rho_0}{1-\bmass_0} \bfu \cdot \nabla\bmass' + \tfrac{\rho_0 (1-\bmass_0)}{2 c^2} (\bfu \cdot \bfu)_t . 
		\end{aligned}
	\end{equation}
	\subsubsection{Combining the simplified equations} Analogously to the derivation of nonlinear acoustic models for thermoviscous fluids, we apply the divergence operator to \eqref{eq: momentum3} and a time derivative to \eqref{eq: mass2}. Subtracting these two resulting equations 
	leads to the cancellation of the $\rho_0 \left( \nabla \cdot \bfu_t \right)$ terms and yields
	\begin{equation} \label{eq: merged}
		\begin{aligned}
			&\begin{multlined}[t]  \Delta p' - \tfrac{1-\bmass_0}{c^2} p'_{tt} +  \tfrac{\rho_0}{1-\bmass_0} \bmass'_{tt} + \tfrac{(1-\bmass_0)^2}{\rho_0 c^4} \tfrac{B}{2A} (p'^2)_{tt} \\
			 + \tfrac{\gamma (1-\bmass_0)^2}{\rho_0 c^4} \left( \tfrac{1}{c_v} - \tfrac{1}{c_p} \right) p'_{ttt}-\tfrac{1-\bmass_0}{2 \rho_0 c^2} \Delta p'^2+ \tfrac{(1-\bmass_0)^2}{2 \rho_0 c^4} (p'^2)_{tt} \end{multlined} \\
			=\,&\begin{multlined}[t]   - \frac{1}{1-\bmass_0} \nabla \cdot \left(\bmass' \nabla p' \right)  -  \tfrac{\left( \frac{4}{3} \mu + \mu_b \right)}{n_0} \Delta n'_t - \tfrac{\rho_0}{2} \Delta \left( \bfu \cdot \bfu \right) \\- \tfrac{\rho_0}{2(1-\bmass_0)^2} (\bmass'^2)_{tt}  
			 - \tfrac{\rho_0}{1-\bmass_0} \left( \bfu \cdot \nabla\bmass' \right)_t - \tfrac{\rho_0 (1-\bmass_0)}{2 c^2} (\bfu \cdot \bfu)_{tt}. \end{multlined}
		\end{aligned}
	\end{equation}
	We next make use of the relation 
$
		\bmass_t'= \frac{(1-\bmass_0)^2}{\rho_0 c^2} p_t' - \frac{1-\bmass_0}{n_0} n_t' 
$
	that results from subtracting \eqref{eg: continuity linear} from \eqref{eq: bubbles linear}. Together with \eqref{eq: momentum linear}, we obtain
	\begin{equation}
		\begin{aligned}
			- \tfrac{1}{1-\bmass_0} \nabla \cdot \left(\bmass' \nabla p' \right) - \tfrac{\rho_0}{1-\bmass_0} \left( \bfu\cdot \nabla\bmass' \right)_t = -  \tfrac{1}{1-\bmass_0} \bmass' \Delta p' + \tfrac{\rho_0 (1-\bmass_0)}{c^2} \bfu \cdot \bfu_{tt} - \rho_0 \bfu \cdot \Delta \bfu. 
		\end{aligned}
	\end{equation}
	This approach yields the following replacement for equation \eqref{eq: merged}:
	\begin{equation}\label{eq: merged1}
		\begin{aligned} 
		 &\begin{multlined}[t]	\Delta p' - \tfrac{1-\bmass_0}{c^2} p'_{tt} +  \tfrac{\rho_0}{1-\bmass_0} \bmass'_{tt} + \tfrac{(1-\bmass_0)^2}{\rho_0 c^4} \tfrac{B}{2A} (p'^2)_{tt} \\ \hspace*{1cm}
			 +  \tfrac{\gamma (1-\bmass_0)^2}{\rho_0 c^4} \left( \tfrac{1}{c_v} - \tfrac{1}{c_p} \right) p'_{ttt} -\tfrac{(1-\bmass_0)}{2 \rho_0 c^2} \Delta p'^2 +\tfrac{(1-\bmass_0)^2}{2 \rho_0 c^4} (p'^2)_{tt}  \end{multlined} \\
			=& \begin{multlined}[t] -  \tfrac{1}{1-\bmass_0} \bmass' \Delta p' + \rho_0 \bfu \cdot \left( \tfrac{1-\bmass_0}{c^2} \bfu_{tt} - \Delta \bfu \right)   -  \tfrac{ \frac{4}{3} \mu + \mu_b }{n_0} \Delta n'_t \\ \hspace*{1cm}
				- \tfrac{\rho_0}{2} \Delta \left( \bfu \cdot \bfu \right) - \tfrac{\rho_0 (1-\bmass_0)}{2 c^2} (\bfu \cdot \bfu)_{tt}  - \tfrac{\rho_0}{2(1-\bmass_0)^2} (\bmass'^2)_{tt}. \end{multlined}
		\end{aligned}
	\end{equation}
	Additionally, we use the relations between time and spatial derivatives according to the linear wave equation for pressure and acoustic velocity that are given by
	\begin{align} \label{eq: lin pressure}
		\Delta p'&\approx\tfrac{1}{c^2} p'_{tt}, \qquad \Delta p'^2\approx\tfrac{1}{c^2} (p'^2)_{tt},\\
		\Delta \bfu&\approx\tfrac{1}{c^2} \bfu_{tt}, \qquad \Delta (\bfu \cdot \bfu)\approx\tfrac{1}{c^2} (\bfu \cdot \bfu)_{tt}. \label{eq: lin velocity}
	\end{align}
	Employing \eqref{eq: lin pressure} and \eqref{eq: lin velocity} within the right-hand side of \eqref{eq: merged1} yields 
	\begin{equation}\label{eq: merged3}
		\begin{aligned} 
			\Delta p' -& \frac{1-\bmass_0}{c^2} p'_{tt} +  \frac{\rho_0}{1-\bmass_0} \bmass'_{tt} +  \frac{\tilde{\beta}}{\rho_0 c^4}  ( p'^2)_{tt} +\frac{b}{c^2} \Delta p'_t  \\
			=& \begin{multlined}[t]   -  \frac{1}{1-\bmass_0} \bmass' \Delta p' -  \frac{\frac{4}{3} \mu + \mu_b }{n_0} \Delta n'_t  - \frac{\rho_0}{2(1-\bmass_0)^2} (\bmass'^2)_{tt} \\ \hspace*{3cm}
				+\frac{\rho_0 \bmass_0}{c^2} \left( \bfu \cdot \bfu \right)_t  - \frac{\rho_0}{c^2} (\bfu \cdot \bfu)_{tt} \end{multlined}
		\end{aligned}
	\end{equation}
	with $b=\frac{\gamma (1-\bmass_0)^2}{\rho_0} \left( \frac{1}{c_v} - \frac{1}{c_p} \right)$ and $\tilde{\beta}=(1-\bmass_0)^2 \frac{B}{2A}- \frac{(1-\bmass_0)\bmass_0}{2}$. 
	
	\subsubsection{Nonlinear acoustic equations for bubbly fluids} To arrive at bubbly media counterparts of classical models of nonlinear acoustics, we now consider the case that the bubble number density is constant in the equilibrium state, such that $n=n_0$ and hence $\bmass=n_0v$. 
	Additionally, we assume that the mean volume fraction occupied by gas is negligible $(\bmass_0 \approx 0)$, allowing us to approximate $\rho_0 \approx \rho_{l0}$. \vspace*{2mm}
	
	\noindent $\bullet$ \emph{Nonlinear in $v'$}. Considering all terms up to second order for $p'$ and $\bmass'=n_0v$, we can reduce \eqref{eq: merged3} to
	\begin{equation} \label{eq: merged reduced second} 
			\frac{1}{c^2}p'_{tt} \negthickspace-  \negmedspace \Delta p' \negthickspace -  \negmedspace \frac{b}{c^2 }  \Delta p'_t \negthickspace -  \negmedspace \frac{1}{\rho_0 c^4}\tfrac{B}{2A} (p'^2)_{tt} \negthickspace- \negmedspace \frac{\rho_0}{c^2} (\bfu \negthinspace \cdot  \negthinspace\bfu)_{tt} \negthickspace =  \rho_0 n_0 v'_{tt} \negmedspace +  \negmedspace n_0 v' \Delta p' \negthickspace +  \negmedspace\frac{\rho_0 n_0^2}{2} (v'^2)_{tt},
	\end{equation}
	where $b=\frac{\gamma}{\rho_0} \left( \frac{1}{c_v} - \frac{1}{c_p} \right)$ is the sound diffusivity.
	This is an inhomogeneous version of the Kuznetsov equation~\cite{kuznetsov1971equations} with nonlinear coupling terms on the right-hand side. With the commonly used approximation 
$
		\| \bfu\| = \sqrt{\bfu \cdot \bfu } \approx \left| \tfrac{1}{c \rho_0} p'\right| $
	(valid if the propagation distance is much larger than a wavelength), we can reduce equation \eqref{eq: merged reduced second} to the inhomogeneous Westervelt equation  given by
	\begin{align} \label{eq: merged reduced5}
		\frac{1}{c^2 }p'_{tt} - \Delta p' -  \frac{b}{c^2 }  \Delta p'_t - \frac{\beta}{\rho_0 c^4} (p'^2)_{tt}=  \rho_0 n_0 v'_{tt} + n_0 v' \Delta p' + \frac{\rho_0 n_0^2}{2} (v'^2)_{tt}
	\end{align}
	with $\beta=\frac{B}{2A}+1$. \vspace*{2mm}
	
	\noindent $\bullet$ \emph{Linear in $v'$}. Retaining only first-order terms for $\bmass=n_0 v$ permits further reduction of equation \eqref{eq: merged3} to 
	\begin{align} \label{eq: merged reduced3}
		\frac{1}{c^2 }p'_{tt} -  \Delta p' - \frac{ b}{c^2 }  \Delta p'_t - \frac{1}{\rho_0 c^4} \tfrac{B}{2A}(p'^2)_{tt}- \frac{\rho_0 }{c^2 }(\bfu \cdot \bfu)_{tt}   =  \rho_0 n_0 v'_{tt}.
	\end{align}
	Equation \eqref{eq: merged reduced3} is an inhomogeneous Kuznetsov equation with the right-hand side terms that are linear in $v'$. By reasoning as before, we obtain the following version of the inhomogeneous Westervelt equation:
	\begin{align} \label{eq: merged reduced4}
		\frac{1}{c^2 }p'_{tt} - \Delta p' -  \frac{b}{c^2 }  \Delta p'_t - \frac{\beta}{\rho_0 c^4} (p'^2)_{tt}=   \rho_0 n_0 v'_{tt},
	\end{align}
	which will be at the focal point of our theoretical and numerical investigations. 
	Observe that equation \eqref{eq: merged reduced4} has been derived by taking into account all terms up to second order for $p'$ but retaining only terms that are linear in $v'$.  
	\subsection{Coupling to microbubble dynamics} \label{sec: bubble models}
		\begin{wrapfigure}{r}{5cm}
			\vspace{-0.48cm}
			\includegraphics{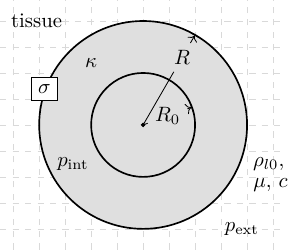}
%
%
%
%
%
		\vspace{-0.6cm}
		\caption{Single-microbubble dynamics; adapted from~\cite{lauterborn2023acoustic}} \label{Fig:Bubble}
		\vspace{-0.2cm}
	\end{wrapfigure}
	Solving equation \eqref{eq: merged reduced4} requires an additional relation between the sound pressure and the microbubble volume and we therefore couple it with a Rayleigh--Plesset-type equation in terms of the bubble radius $R$ together with the volume-radius relation $v=\frac{4}{3}\pi R^3$. As already mentioned, many different models exist for describing the oscillating dynamics of microbubbles. We provide an overview here and refer to, e.g., the book~\cite{brennen_2013} and the review papers~\cite{versluis2020ultrasound, doinikov2011review} for further details.	\\
		 Starting from
	\begin{equation} \label{RP_starting}
			\rho_{l0}  \left[ R R_{tt} + \tfrac{3}{2} R_t^2 \right]	=\, \pint-\pext 
	\end{equation}
	where we recall that $\rho_{l0}$ is the mean mass density of the liquid and $\pint$ the internal pressure, the simplest Rayleigh--Plesset equation is given by 
	\begin{equation} \label{RP_1a}
		\begin{aligned}
			\rho_{l0}  \left[ R R_{tt} + \tfrac{3}{2} R_t^2 \right] = \pb -4 \mu\frac{ R_t}{R} -p',
		\end{aligned}
	\end{equation}
	where $\pb= \pv-\pstat$, $\pv$ is the vapor pressure, $\pstat$ is the static ambient pressure, and $\mu$ is the  viscosity of the medium; see Figure \ref{Fig:Bubble}.
 A modified equation is given by
	\begin{equation} \label{RP_2}
		\begin{aligned}
			\rho_{l0} \left[ R R_{tt} + \tfrac{3}{2} R_t^2 \right]  = \pb - 4 \mu \dfrac{R_t}{R}- \dfrac{2 \sigma}{R}  -p',
		\end{aligned}
	\end{equation} 
	where $\sigma$ is the surface tension between the liquid and vapor/gas at the bubble wall. \\ 
	\indent A further extension of the ODE arises from taking bubble contents into considerations. A generalized Rayleigh--Plesset equation, also known as the RPNNP (which stands for Rayleigh--Plesset--Noltingk--Neppiras--Poritsky) equation is given by
	\begin{equation} \label{RP_3}
		\begin{aligned}
			\rho_{l0} \left[ R R_{tt} + \tfrac{3}{2} R_t^2 \right]
			=\, \pb-  4 \mu \dfrac{ \Rt}{R} -\dfrac{2 \sigma}{R}+\ppgn \left(\dfrac{\Rn}{R} \right)^{3 \kappa}-p';
		\end{aligned}
	\end{equation}
	see~\cite{rayleigh1917viii, noltingk1950cavitation}. Here $\ppgn$ is the (constant) gas pressure  inside the bubble at rest given by $p_{\textup{pgn}} = \frac{2 \sigma}{\Req} - \pb;$
	see ~\cite{lauterborn2023acoustic}. Furthermore, $\Rn$ in \eqref{RP_3} is the equilibrium radius, i.e., the radius of a bubble at rest and $\kappa \geq 1$ is the adiabatic index. \\[1mm] 
%
%
%
%
%
	\noindent $\bullet$ \emph{Incorporating acoustic radiation}. Other versions of microbubble models arise from considering damping of the bubble dynamics by the sound radiated by the bubble. A model that incorporates sound radiation is given by (see~\cite{versluis2020ultrasound})
	\begin{equation} \label{RP_4}
		\begin{aligned}
			\rho_{l0}  \left[ R R_{tt} + \tfrac{3}{2} R_t^2 \right]
			=\, \pb-  4 \mu \dfrac{ \Rt}{R} -\dfrac{2 \sigma}{R}+\ppgn \left(\dfrac{\Rn}{R} \right)^{3 \kappa}\left(1- 3 \kappa \dfrac{R_t}{c}\right)-p'.
		\end{aligned}
	\end{equation}
	
	\noindent $\bullet$ \emph{Coated-bubble dynamics}. The coating of bubbles is known to stabilize them and prolong their existence; see, e.g.,~\cite{versluis2020ultrasound}. In this case, the size-dependent effective surface tension can given by $\sigma(R) = \chi \bigl( \tfrac{R^2}{\Req^2}-1 \bigr)$, 
	where $\chi$ is the shell elasticity. The dynamics of microbubbles with thin shells can then be described by
	\begin{equation} \label{RP_shell_1}
		\begin{aligned}
			\rho_{l0}  \left[ R R_{tt} + \tfrac{3}{2} R_t^2 \right]
			=\, \pb-  4 \mu \frac{ \Rt}{R} -\frac{2\sigma(R)}{R}+\ppgn \left(\frac{\Rn}{R} \right)^{3 \kappa}\left(1- 3 \kappa \frac{R_t}{c}\right)-4 \kappa_s \frac{R_t}{R^2}-p'
		\end{aligned}
	\end{equation}
	with $p_{\textup{pgn}} = \frac{2 \sigma_0}{\Req} - \pb$, where $\sigma_0$ is the surface tension of the bubble at rest and $\kappa_s$ is the surface dilatational viscosity of the shell; see~\cite[eq.\ (8)]{versluis2020ultrasound}.  \vspace*{1mm}

\noindent{\bf Generalized ODE}. Going forward, in the analysis, we assume $\rho_0 \approx \rho_{l0}$ and consider the following mathematically generalized equation for the dynamics of microbubbles:
	\begin{equation}
		\begin{aligned}
			\rho_0  \left[ R R_{tt} + \tfrac{3}{2} R_t^2 \right] \begin{multlined}[t]=h_0(R, \Rt) - p', \end{multlined}
		\end{aligned}
	\end{equation}
	where we impose local Lipschitz continuity on the function $\hzero$; see Theorem~\ref{Thm:LocalWellp_WestRP} for details. The assumptions allow having 
	\begin{equation} \label{def hzero}
		\begin{aligned}
			h_0(R, \Rt)=\begin{multlined}[t]  \pb -   \frac{4 \mu}{R}\Rt -\frac{2\sigma(R)}{R} +\ppgn \left(\frac{\Rn}{R} \right)^{3 \kappa}{\left(1- 3 \kappa_0 \frac{\Rt}{c}\right)} { -4 \kappa_s \dfrac{\Rt}{R^2} }
			\end{multlined}
		\end{aligned}
	\end{equation} 
	with $\sigma$ constant or $\sigma(R) = \chi \left( \tfrac{R^2}{\Req^2}-1\right)$ and $\kappa_s$, $\kappa_0 \in \R$ so that equation \eqref{RP_4} (with $\kappa_0 = \kappa$) and equation \eqref{RP_3} (with $\kappa_0=\kappa_s=0$) are covered by our theoretical results. The ODE will then be coupled to the damped Westervelt equation for the pressure, using the fact that $v'_{tt}=v_{tt}$ to arrive at the right-hand side. When considering this coupled system, $R=R(x,t)$ is a function of both time and space. Thus, the equation governing the dynamics of microbubbles is an ODE defined pointwise in space.

	\section{Analysis of the Westervelt--Rayleigh--Plesset system} \label{Sec:Analysis}
	In this section, we analyze the Westervelt--Rayleigh--Plesset system. For simplicity of notation, we drop $(\cdot)'$ when denoting fluctuating pressure quantities and use $p$ instead of $p'$, but we keep the notation $R=R_0 + R'$ for the total bubble radius. We consider the following initial boundary-value problem:
	\begin{equation} \label{ibvp_fractionalWestRP_general}
		\left \{ \begin{aligned}
			&((1+ 2k(x)p) \pt)_t - c^2 \Delta p - b \Delta \pt= \xi (R^3)_{tt} \quad \text{in } \Omega \times (0,T), \\[1mm]
			&(p, p_t)_{ \vert t=0} = (p_0, p_1), \quad p_{\vert \partial \Omega} =0,\\[1mm]
			& 		\rho_0  \left[ R R_{tt} + \tfrac{3}{2} R_t^2 \right] \begin{multlined}[t]=h_0(R, \Rt)-p   \qquad \text{in } \Omega \times (0,T), \end{multlined} \\[1mm]
			&  (R, \Rt)_{ \vert t=0} = (R_0, R_1),
		\end{aligned} \right.
	\end{equation}
	with $\xi \in \R$ (corresponding to $\frac43 \pi c^2 \eta$ in the derivation).
	The aim of this section is to determine the conditions on the initial pressure-microbubble data under which \eqref{ibvp_fractionalWestRP_general} has a unique solution.  
	\subsection*{Notation} \emph{Below we use $x \lesssim y$ to denote $x \leq C y$, where $C>0$ is a generic positive constant which may depend on the final time in such a manner that it tends to $+\infty$ as $T \rightarrow +\infty$.  We use the notation $\Honetwo=\, 	H^2(\Omega) \cap H_0^1(\Omega)$. 
	When denoting norms in Bochner spaces, we omit the temporal domain; for example, $\|\cdot\|_{L^p(L^q(\Om))}$ denotes the norm in $L^p(0,T; L^q(\Om))$.}\\[2mm]
 
We recall first a well-posedness result for the Westervelt equation for non-bubbly media with strong attenuation, which is needed to set up the analysis of the coupled problem. This form of Westervelt's equation has been extensively studied in the mathematical literature; see, e.g.,~\cite{kaltenbacher2009global, meyer2011optimal, kaltenbacher2022parabolic, kaltenbacher_rundell2021} and the references provided therein. One of the key aspects of its analysis is ensuring that the factor $1+2k p$ next to the second time derivative of the pressure stays  positive (i.e., that it does not degenerate), which can be achieved through sufficiently small (and smooth) data.
	\begin{proposition} \label{Prop:WellP_West}
		Assume that $\Omega \subset \R^d$, $d \in \{1, 2,3\}$ is a bounded and $C^{1,1}$-regular domain. Let $c$, $b>0$ and $k \in L^\infty(\Om)$. Furthermore, let $(p_0, p_1) \in  \Honetwo \times \Honezero$ and  $f \in L^2(0,T; \Ltwo)$.
		There exists data size $\delta=\delta(T)>0$,  such that if
		\begin{equation}\label{smallness_delta_Westervelt}
			\|p_0\|_{\Htwo}+\|p_1\|_{\Hone} + \|f\|_{L^2(\Ltwo)} \leq \delta,
		\end{equation}
		then there is a unique solution of 
		\begin{equation} \label{IBVP_West}
			\left \{ \begin{aligned}
				&((1+ 2k(x)p) \pt)_t - c^2 \Delta p - b \Delta \pt = f(x,t) \quad \text{in } \Omega \times (0,T), \\
				&p_{\vert \partial \Omega} =0, \quad  (p, p_t)_{ \vert t=0} = (p_0, p_1)
			\end{aligned} \right.
		\end{equation}
		in
				$\Xp =  L^\infty(0,T; \Honetwo) \cap W^{1, \infty}(0,T; \Honezero) \cap H^2(0,T; \Ltwo)$,
		such that $1+ 2k p \geq \gamma >0 \ \text{ in }\ \Omega \times (0,T)$
		for some $\gamma>0$.	Furthermore, 
		\begin{equation}
			\begin{aligned}
				\|p\|^2_{\Xp} \lesssim\|p_0\|^2_{\Htwo}+\|p_1\|^2_{\Hone} + \|f\|^2_{L^2(\Ltwo))} .
			\end{aligned}
		\end{equation}
	\end{proposition}
	\begin{proof}
		The proof follows analogously to that of \cite[Proposition 1]{kaltenbacher_rundell2021}; the only difference is that here smallness of data is imposed instead of the smallness of $\|k\|_{L^\infty(\Omega)}$. We thus omit the details.
	\end{proof}
	Under the assumptions of Proposition~\ref{Prop:WellP_West}, we can define the solution mapping 
	\[
	\calS: \LtwoTLtwo \rightarrow \Xp,\quad \text{ such that  }  \calS(f) = p,
	\]
	which will be employed in the analysis of the coupled problem. We will furthermore exploit Lipschitz continuity of this mapping in the following sense. Let $\fone$, $\ftwo \in  \LtwoTLtwo$ and denote $\pone = \calS(\fone)$ and $\ptwo = \calS(\ftwo)$. By Proposition~\ref{Prop:WellP_West}, 
	\[
	\|\pone\|_{\Xp}, \, \|\ptwo\|_{\Xp} \leq \mathcal{C}
	\]
	for some $\mathcal{C>0}$. Then it can be shown (analogously to the proof of contractivity in \cite[Proposition 1]{kaltenbacher_rundell2021}) that
	\begin{equation} \label{Lipschitz continuity p}
		\begin{aligned}
			\| \pone - \ptwo\|_{\Xp} \leq C(T, \mathcal{C}) \|\fone - \ftwo\|_{\LtwoLtwo};
		\end{aligned}
	\end{equation}
	for completeness, we provide the details in Appendix~\ref{Appendix: Lipschitz}. \\
\indent We approach the analysis of the Westervelt--Rayleigh--Plesset system by employing Banach's fixed-point theorem. To this end, we linearize the Rayleigh--Plesset equation, in the general spirit of~\cite{vodak2018mathematical, biro2000analysis}, while eliminating $p$ via the mapping $\calS$. More precisely, to analyze \eqref{ibvp_fractionalWestRP_general} under the assumptions of Proposition~\ref{Prop:WellP_West}, we set up the fixed-point mapping $\calT: \BR \ni \Rs \mapsto R$,
	where $R$ solves 
	\begin{equation} \label{linearized_system}
		\left \{ \begin{aligned}
			& \Rtt =	h(\Rs, \Rst, \calS(f(\Rs))) \\
			&(R, \Rt) \vert_{t=0} = (R_0, R_1),
		\end{aligned} \right.
	\end{equation}
	with $f(\Rs) = \xi \left((\Rs)^3\right)_{tt}$ and the function $h$ given by
	\begin{equation} \label{def h}
		h(\Rs, \Rst, \calS(f(\Rs))) = \frac{1}{\Rs} \bigl(-\tfrac32 (\Rst)^2 +\rhoinv \hzero(\Rs, \Rst)- \rhoinv \calS(f(\Rs)) \bigr).
	\end{equation}
	Above, $R^*$ is taken from the set \vspace*{-1mm}
	\begin{equation} \label{ball_R}
		\begin{aligned}
		\BR \negthinspace = \negthinspace \big\{ \negthinspace \Rs \negthickspace \in \negthinspace C^{2} ([0,T]; \Linf) \negthinspace:&\, \|\Rstt\|_{\CLinf}  \negthickspace \leq \negthinspace M, \\ &\|\Rs\|_{\CLinf}  \negthickspace + \negthinspace \|\Rst\|_{\CLinf}  \negthickspace \leq m, \\[1mm]
		&\,  \|\Rs -R_0\|_{C(\Linf)} \leq \varepsilon_0, \,(\Rs, \Rst)_{\vert t=0}=(R_0, R_1) \big \}.
		\end{aligned}
	\end{equation}
	Above, $\varepsilon_0>0$  is small enough and will be set by the upcoming proof, together with small enough $m>0$ and an adequately calibrated $M>0$.  We note that the $\eps_0$ condition  is there to ensure the positivity of $R$, provided $R_0$ is positive. \\
	\indent  We make the following assumptions on the function $h_0$ in \eqref{def h}. We assume that for any $\Rs \in \BR$, and for any $\Rsone$, $\Rstwo \in \BR$,
	\begin{align}
		\|\hzero(\Rs, \Rst)\|_{\CLinf} \leq C&(m, \varepsilon_0)  \label{assumption1 h_0} \\
		\|\hzero(\Rsone, \Rsone_t)-\hzero(\Rstwo, \Rstwo_t)\|_{\CLinf} & \lesssim  \|\Rsone-\Rstwo\|_{\ConeLinf} \label{assumption2 h_0}
	\end{align}  
	We note that function $\hzero$ in \eqref{def hzero} satisfies these assumptions; for completeness, we provide the proof in Appendix~\ref{Appendix: Lipschitz}.
	\begin{theorem} \label{Thm:LocalWellp_WestRP} Assume that $\Omega \subset \R^d$, where $d \in \{2,3\}$, is a bounded and $C^{1,1}$-regular domain. Furthermore, assume that $c$, $b>0$,  $\rho_0>0$, $\xi \in \R$, and $k\in L^\infty(\Om)$, and let $(p_0, p_1) \in  \Honetwo \times \Honezero, \, (R_0, R_1) \in \Linf \times \Linf$,
		where
		\begin{equation}
			R_0(x) \geq \underline{R}_0 >0 \ \quad \text{a.e. in } \Omega.
		\end{equation}
		Let assumptions \eqref{assumption1 h_0} and \eqref{assumption2 h_0} on the function $\hzero$ hold. Then there exist pressure data size $\deltap>0$, bubble data size $\deltaR $, 
		and final time $\tilde{T}$,
		such that if
		\begin{equation}
			\|p_0\|_{\Htwo}+\|p_1\|_{\Hone} \leq \deltap, \quad \|R_0\|_{\Linf} + \|R_1\|_{\Linf} \leq \deltaR, \ \text{ and } \ T \leq \tilde{T},
		\end{equation}
		there is a unique $(p, R) \in \Xp \times \BR$ that solves \eqref{ibvp_fractionalWestRP_general}.
	\end{theorem}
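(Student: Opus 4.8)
\emph{Overall strategy.} I would prove the theorem by applying Banach's fixed-point theorem to the map $\calT$ on the set $\BR$. First, note that $\BR$ is a \emph{closed} subset of the Banach space $\CtwoLinf$ — each of the four defining conditions in \eqref{ball_R} is preserved under $C^2([0,T];\Linf)$-convergence — so it is a complete metric space in that norm. The structural point that makes the argument work is that the right-hand side $h(\Rs,\Rst,\calS(f(\Rs)))$ in \eqref{linearized_system} depends only on the \emph{input} $\Rs$, so $\calT$ is nothing but double time integration,
\begin{equation*}
	\calT(\Rs)(x,t) = R_0(x) + R_1(x)\,t + \int_0^t (t-s)\, h(\Rs,\Rst,\calS(f(\Rs)))(x,s)\ds ,
\end{equation*}
and no auxiliary differential equation has to be solved along the way. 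The small parameters will be fixed in the order: $\varepsilon_0<\underline{R}_0$ (to keep the bubble radius positive), then $m$ (chosen with $m>\deltaR$), then $M$ (calibrated from an $L^\infty$-bound on $h$), then $\deltap$, and finally $\deltaR$ and $\tilde T$.

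\emph{Step 1: $\calT$ is well defined and maps $\BR$ into itself.} For $\Rs\in\BR$ we have $((\Rs)^3)_{tt}=6\Rs(\Rst)^2+3(\Rs)^2\Rstt\in\CLinf$, hence $f(\Rs)=\xi((\Rs)^3)_{tt}\in\LtwoTLtwo$ with $\|f(\Rs)\|_{\LtwoLtwo}\lesssim |\xi|\,|\Omega|^{1/2}T^{1/2}(m^3+m^2M)$. Taking $\deltap$ and $\tilde T$ small, the smallness condition \eqref{smallness_delta_Westervelt} of Proposition~\ref{Prop:WellP_West} holds, so $p:=\calS(f(\Rs))\in\Xp$ with $\|p\|_{\Xp}$ bounded independently of $\Rs\in\BR$; since $d\le 3$, $p$ admits a representative in $C([0,T];\Linf)$ (via weak-$*$ continuity into $H^2$ and strong continuity into $H^s$ for some $d/2<s<2$), say $\|p\|_{\CLinf}\le P_{\max}$. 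Because $\|\Rs-R_0\|_{\CLinf}\le\varepsilon_0$ and $R_0\ge\underline{R}_0$, we get $\Rs\ge\underline{R}_0-\varepsilon_0>0$ and $\|1/\Rs\|_{\CLinf}\le(\underline{R}_0-\varepsilon_0)^{-1}$, so, using \eqref{assumption1 h_0},
\begin{equation*}
	\|h(\Rs,\Rst,p)\|_{\CLinf}\le \frac{1}{\underline{R}_0-\varepsilon_0}\Big(\tfrac32 m^2+\rhoinv C(m,\varepsilon_0)+\rhoinv P_{\max}\Big)=:M .
\end{equation*}
With $M$ fixed this way, $\|\calT(\Rs)_{tt}\|_{\CLinf}=\|h\|_{\CLinf}\le M$, while the quadrature formula gives $\|\calT(\Rs)_t\|_{\CLinf}\le\|R_1\|_{\Linf}+TM$, $\|\calT(\Rs)-R_0\|_{\CLinf}\le T\|R_1\|_{\Linf}+\tfrac{T^2}{2}M$, and $\|\calT(\Rs)\|_{\CLinf}\le\|R_0\|_{\Linf}+T\|R_1\|_{\Linf}+\tfrac{T^2}{2}M$; taking $m=2\deltaR$ (compatible with $\underline{R}_0\le\deltaR$) and then $\tilde T$ small, all conditions in \eqref{ball_R} are met. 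Finally $\calT(\Rs)\in\CtwoLinf$ because $h(\Rs,\Rst,p)\in\CLinf$: the terms $\Rs,\Rst,1/\Rs$ are time-continuous into $\Linf$, $h_0(\Rs,\Rst)$ by the Lipschitz/continuity properties of $\hzero$, and $p$ by the embedding just invoked.

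\emph{Step 2: $\calT$ is a contraction.} Given $\Rsone,\Rstwo\in\BR$, put $\Delta R^*=\Rsone-\Rstwo$ and $\Delta R=\calT(\Rsone)-\calT(\Rstwo)$; both vanish together with their first time derivative at $t=0$, so $\|\Delta R^*\|_{\ConeLinf}\le(T+T^2)\|\Delta R^*\|_{\CtwoLinf}$ and $\|\Delta R\|_{\CtwoLinf}\le(1+T+T^2)\|\Delta R_{tt}\|_{\CLinf}$. I would bound $\|\Delta R_{tt}\|_{\CLinf}$, which equals the $\CLinf$-norm of the difference of the right-hand sides, term by term: Lipschitz continuity of $s\mapsto 1/s$ on $[\underline{R}_0-\varepsilon_0,\infty)$; the elementary estimate $|a^2-b^2|\le(|a|+|b|)|a-b|$ with $|a|,|b|\le m$; assumption \eqref{assumption2 h_0} for $\hzero$; and, for the pressure term, the Lipschitz bound \eqref{Lipschitz continuity p} (followed by the embedding into $\CLinf$) combined with
\begin{equation*}
	\|f(\Rsone)-f(\Rstwo)\|_{\LtwoLtwo}\lesssim |\xi|\,|\Omega|^{1/2}T^{1/2}\big(m^2\|\Delta R^*\|_{\ConeLinf}+mM\|\Delta R^*\|_{\CLinf}+m^2\|\Delta R^*\|_{\CtwoLinf}\big),
\end{equation*}
obtained by expanding $((\Rs)^3)_{tt}$ and telescoping; the \emph{top-order} contribution $3(\Rs)^2\Rstt$ is responsible for the last term $m^2\|\Delta R^*_{tt}\|_{\CLinf}$, but because $f$ is measured in $\LtwoLtwo$ it still carries the factor $T^{1/2}$. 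Collecting all contributions and using the norm comparisons above yields an estimate of the form
\begin{equation*}
	\|\calT(\Rsone)-\calT(\Rstwo)\|_{\CtwoLinf}\le C(m,M,\varepsilon_0,\rho,\xi,\Omega)\,\big(T^{1/2}+T\big)\,\|\Rsone-\Rstwo\|_{\CtwoLinf},
\end{equation*}
which is a contraction once $\tilde T$ is small enough.

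\emph{Conclusion and the main obstacle.} Banach's fixed-point theorem then produces a unique $R\in\BR$ with $\calT(R)=R$; setting $p:=\calS(f(R))\in\Xp$, the pair $(p,R)$ solves the Westervelt equation with source $\xi(R^3)_{tt}$ together with $\rho R\Rtt+\tfrac32\rho\Rt^2=\hzero(R,\Rt)-p$ under the prescribed initial and boundary data, i.e.\ \eqref{ibvp_fractionalWestRP_general} with $\calA p=-b\Delta\pt$, and $R\ge\underline{R}_0-\varepsilon_0>0$ by construction. Uniqueness in $\Xp\times\BR$ follows because any such solution must satisfy $p=\calS(f(R))$ (the uniqueness part of Proposition~\ref{Prop:WellP_West}), so $R$ is the unique fixed point of $\calT$. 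The hard part, I expect, is the top-order term $3(\Rs)^2\Rstt$ in the coupling source: it carries the second time derivative, which is exactly what the fixed point produces and which on $\BR$ is controlled only uniformly, not smallly, in $L^\infty_t(\Linf)$. The self-mapping step copes with this by calibrating $M$ after $m$ and $\varepsilon_0$; the contraction step exploits the smoothing $\calS:\LtwoTLtwo\to\Xp$ together with the fact that the source is measured only in $L^2$ in time, turning the uncontrollable factor into one with a $T^{1/2}$ gain. The remaining burden is purely bookkeeping: keeping the chain $\varepsilon_0<\underline{R}_0$, $m$, $M$, $\deltap$, $\deltaR$, $\tilde T$ mutually consistent.
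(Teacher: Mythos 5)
Your proposal is correct and follows essentially the same route as the paper's proof: a Banach fixed-point argument for $\calT$ on $\BR$, with the self-mapping property secured by calibrating $M$ from the uniform bound on $h$ (using \eqref{boundedness_Rs}, \eqref{assumption1 h_0}, and Proposition~\ref{Prop:WellP_West}), and contractivity obtained from \eqref{assumption2 h_0}, the Lipschitz bound \eqref{Lipschitz continuity p}, and the $\sqrt{T}$ gain coming from measuring the source $f(\Rs)=\xi((\Rs)^3)_{tt}$ in $L^2$ in time. Your explicit identification of the top-order term $3(\Rs)^2\Rstt$ as the delicate contribution, and of the order in which $\varepsilon_0$, $m$, $M$, $\deltap$, $\deltaR$, $\tilde T$ are fixed, matches the paper's argument.
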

	\begin{proof}
		As announced, we carry out the proof by relying on the Banach fixed-point theorem. Note that $\BR \neq \emptyset$ since the solution of 
		\[
		\begin{aligned}
			\Rtt=0, \quad (R, \Rt)\vert_{t=0}= (R_0, R_1)
		\end{aligned}
		\]
		belongs to $\BR$ as long as the final time and $\delta_R$ are small enough.\\
		\indent  Let $\Rs \in \BR$. For small enough $\varepsilon_0=\varepsilon_0( \underline{R}_0)$, $\Rs$ is bounded and positive:
		\begin{equation} \label{boundedness_Rs}
			0 < \ulR:=  \underline{R}_0-\varepsilon_0 \leq  R_0-\varepsilon_0 \leq \Rs(x,t) \leq \|R_0\|_{\Linf}+ \varepsilon_0 := \olR
		\end{equation}
		a.e.\ in $\Om \times(0,T)$. 
		Toward checking the assumptions of Proposition~\ref{Prop:WellP_West}, we note that
		\begin{equation} \label{est_f_further}
			\begin{aligned}
				\|f(\Rs)\|_{L^2(\Ltwo)} \negthickspace
				\leq&\, 3\xi\|\Rs\|^2_{\CLinf} \negthickspace \sqrt{T}\|\Rstt\|_{C(\Ltwo)} \negthickspace+ \negthinspace 6 \xi\|\Rs \negthinspace\|_{\CLinf}\negthickspace \sqrt{T}\|\Rst\|^2_{C(\Ltwo)} \\
				\leq&\,\sqrt{T} C(\Omega) (M+m)m^2.
			\end{aligned}
		\end{equation}
		Therefore, the smallness condition \eqref{smallness_delta_Westervelt} can be fulfilled by making $\deltap$ and $T$ small enough so that
	$
				\delta_p +	\sqrt{T} C(\Omega) (M+m)m^2\leq \delta$.
		Thus, by Prop.~\ref{Prop:WellP_West}, the mapping $\calS$ is well-defined (and, in turn, problem \eqref{linearized_system}) and we have $p= \calS(f(\Rs))$.  The remainder of the proof is dedicated to showing that $\calT$ is a strictly contractive self-mapping. \vspace*{2mm}

		\noindent \underline{The self-mapping property}: 
		From \eqref{linearized_system}, we have the following bound:
		\begin{equation} \label{est_R}
			\begin{aligned}
				&\|\Rtt\|_{\CLinf}
				\leq\, \|h(\Rs, \Rst, \calS(f(\Rs)))\|_{\CLinf}.
			\end{aligned}
		\end{equation}
		To estimate the $h$ term further, we use the lower bound for $\Rs$ established in \eqref{boundedness_Rs}:
		\begin{equation} 
			\begin{aligned}
				\|h(\Rs, \Rst, \calS(f(\Rs))) \|_{\CLinf} \negthickspace
				\leq\begin{multlined}[t] \negthickspace \tfrac{1}{ \olR}\bigl \|  \scalebox{0.75}[1.0]{\( - \)}\tfrac32 (\Rst)^2 \negthickspace+\tfrac{1}{\rho_0} \hzero(\Rs \negthickspace, \Rst) -\tfrac{1}{\rho_0} \calS(f(\Rs)) \bigr  \|_{\CLinf} 	\end{multlined}.
			\end{aligned}
		\end{equation}
		On account of the fact that $\Rs \in \BR$ and assumption \eqref{assumption1 h_0} made on $\hzero$, we then have
		\begin{equation} \label{est_h}
			\begin{aligned}
				\|h(\Rs \negthickspace, \Rst \negthickspace, \calS(f(\Rs))) \|_{\CLinf} \negthinspace 
				\lesssim  \negthinspace  \|\Rst\|_{\Linf}^2 \negthickspace + \negthinspace  C(m, \varepsilon_0) \negthinspace + \negthinspace \|\calS(f(\Rs))\|_{\CLinf}.	
			\end{aligned}
		\end{equation}
		To estimate the $\calS$ term further, we employ Proposition~\ref{Prop:WellP_West} and the embedding $C([0,T]; \Linf) \hookrightarrow \Xp$:
		\[
		\begin{aligned}
			\|\calS(f(\Rs))\|_{\CLinf} \leq&\, C(\Om) \|\calS(f(\Rs))\|_{\Xp}\\
			\leq&\, C(\Om, T) \left(\|p_0\|_{\Htwo}+\|p_1\|_{\Hone}+\|f(\Rs)\|_{\LtwoLtwo} \right).
		\end{aligned} 
		\]
		Then taking into account the bound on $f$ derived in \eqref{est_f_further} yields
		\begin{equation} \label{est Sf}
			\|\calS(f(\Rs))\|_{\CLinf}  \leq  C(\Omega, T) \bigl(\deltap+\sqrt{T}(M+m)m^2 \bigr).
		\end{equation}
		Therefore, by employing \eqref{est Sf} in \eqref{est_h} and then \eqref{est_R}, we conclude that
		\begin{equation}
			\begin{aligned}
				\|\Rtt \|_{\CLinf} 
				\lesssim\,  1+\deltap+m^2+\sqrt{T} (M+m)m^2	 
				\leq\, M,
			\end{aligned}
		\end{equation}
		provided $M>0$ is large enough and $T>0$ small enough relative to it. Next, since
		\begin{equation} \label{RP_eq_solved}
			\begin{aligned}
				R=&\, 
					R_0 +R_1 t + \int_0^t \int_0^s h(\Rs, \Rst, \calS(f(\Rs)))(\ts)\dtsds,
			\end{aligned}
		\end{equation}
		we find that 
		\begin{equation}
			\begin{aligned}
				&\|R \|_{\CLinf} + \|\Rt\|_{\CLinf} \\
				\leq&\, \|R_0\|_{\Linf} + (1+T)\|R_1\|_{\Linf} + (T+T^2) \|h(\Rs, \Rst, \calS(f(\Rs)))\|_{\CLinf} \\
				\lesssim&\,\begin{multlined}[t] (1+T)\deltaR+ (T+T^2) \bigl [1+m^2+\deltap+	\sqrt{T}  (M+m)m^2 \bigr], \end{multlined}
			\end{aligned}
		\end{equation}
		which can be made smaller than $m$ by decreasing $\deltaR$ and $T$.  To conclude that $R \in \BR$, it remains to estimate $R-R_0$. To this end, the expression for $R$ in \eqref{RP_eq_solved} yields
		\begin{equation}
			\begin{aligned}
				\|R-R_0\|_{C(\Linf)} \leq T \|R_1\|_{\Linf}+T^2\|h(\Rs, \Rst, \calS(f(\Rs))\|_{C (\Linf)}.
			\end{aligned}
		\end{equation} 
		Therefore, reducing $T$ allows us to conclude that $\|R-R_0\|_{C(\Linf)} \negthickspace \leq \varepsilon_0$,
		and, 
		$R \in \BR$. \vspace*{2mm}
		
		\noindent \underline{Strict contractivity}: To prove strict contractivity of $\calT$, we take $R^{(1),*}$, $R^{(2),*} \in \BR$ and note that the difference $\tR= \calT (\Rsone)-\calT(\Rstwo)$ solves 
		\begin{equation} \label{RP_diff}
			\begin{aligned}
				\tR_{tt}=&\, \begin{multlined}[t] h(\Rsone, \Rsone_t, \calS(f(\Rsone)))-h(\Rstwo, \Rstwo_t, \calS(f(\Rstwo)))  \end{multlined}	
			\end{aligned}
		\end{equation}
		with zero initial conditions. Thus we immediately have 
		\begin{equation} \label{contractivity_est}
			\begin{aligned}
				\|\tR\|_{\CtwoLinf} \negthickspace
				\lesssim \negthickspace \| h(\Rsone \negthickspace, \Rsone_t \negthickspace, \calS(f(\Rsone)))-h(\Rstwo \negthickspace, \Rstwo_t \negthickspace, \calS(f(\Rstwo)))\|_{\CLinf}
			\end{aligned}
		\end{equation}
		and with $\tR^*= \Rsone-\Rstwo$ we can rewrite the difference of $h$ terms as follows:
		\begin{equation} \label{diff h}
			\begin{aligned}
				&h(\Rsone, \Rsone_t, \calS(f(\Rsone)))-h(\Rstwo, \Rstwo_t, \calS(f(\Rstwo))) \\
				=&\,  \begin{multlined}[t] -\tfrac{\tR^*}{\Rsone \Rstwo}\left [ - \tfrac32 (\Rsone_t)^2 +\rhoinv \hzero(\Rsone, \Rsone_t) \right]\\
					+ \tfrac{1}{\Rstwo}\left[\scalebox{0.75}[1.0]{\( - \)} \tfrac32 \tR^*_t(\Rsone_t \negthickspace+ \Rstwo_t)  + \negthinspace \rhoinv \hzero(\Rsone \negthickspace, \Rsone_t)-\rhoinv \hzero(\Rstwo\negthickspace, \Rstwo_t) \right] \\
					+\tfrac{\tRs}{\rho_0 \Rsone \Rstwo} \calS(f(\Rstwo)) -\tfrac{1}{\rho_0 \Rsone}\left( \calS(f(\Rsone))-\calS(f(\Rstwo))\right) .
				\end{multlined}	
			\end{aligned}
		\end{equation}
		Thanks to the Lipschitz continuity of the pressure field stated in \eqref{Lipschitz continuity p} and the embedding $C([0,T]; \Linf) \hookrightarrow \Xp$, we know that
		\begin{equation}
			\begin{aligned}
				\|\calS(f(\Rsone))-\calS(f(\Rstwo))\|_{\CLinf} 
				\lesssim&\, \sqrt{T} \|f(\Rsone)-f(\Rstwo)\|_{\CLinf}.
			\end{aligned}
		\end{equation}
		We can further see the difference of $f$ terms as
		\begin{equation}
			\begin{aligned}
				f(\Rsone)-f(\Rstwo)
				=&\, \begin{multlined}[t] 3\xi \bigl[	\tRs(\Rsone+\Rstwo) \Rsone_{tt}+(\Rstwo)^2 \tRs_{tt} \bigr]\\\hspace*{1cm}+ 6 \xi \bigl[\tRs (\Rsone_t)^2+\Rstwo \tRs_t(\Rsone_t+\Rstwo_t)\bigr] .	\end{multlined} 
			\end{aligned}
		\end{equation}
		From here, using the fact that $\Rsone$, $\Rstwo \in \BR$, we obtain
		\[
		\|f(\Rsone)-f(\Rstwo)\|_{\CLinf} \leq\, C(M) m \|\tRs\|_{\CtwoLinf}.
		\]
		Combining this bound with the continuity of $h_0$ in the sense of 
		\begin{equation}
			\begin{aligned}
				\|\hzero(\Rsone, \Rsone_t)-\hzero(\Rstwo, \Rstwo_t)\|_{\CLinf} \lesssim&\, \|\Rsone-\Rstwo\|_{\ConeLinf} \\
				\lesssim&\, (T^2+T)\|\Rsone_{tt}-\Rstwo_{tt}\|_{\CLinf}
			\end{aligned}
		\end{equation}
		and estimating the remaining terms on the right-hand side of \eqref{diff h} in a similar manner leads to 
			\begin{equation} \label{est diff h}
			\begin{aligned}
				&\| h(\Rsone,  \Rsone_t, \calS(f(\Rsone)))-h(\Rstwo, \Rstwo_t, \calS(f(\Rstwo)))\|_{\CLinf} \\
				\leq&\ \sqrt{T}\cdot C( \ulR, \olR, m, M, \tilde{T}) \|\tR^*\|_{\CtwoLinf}.
			\end{aligned}
		\end{equation}
		 Then using  \eqref{est diff h} in \eqref{contractivity_est} yields the estimate
	$
				\|\tR\|_{\CtwoLinf} \lesssim\,  \sqrt{T}\|\tR^*\|_{\CtwoLinf}$. Therefore, the mapping $\calT$ is strictly contractive with respect to the norm in the space $C^2([0,T]; \Linf)$ provided final time $T$ is sufficiently small. An application of Banach's fixed-point theorem yields the desired result.
	\end{proof}
\begin{remark}[On nonlocal acoustic attenuation]	In complex media, such as soft tissue, sound attenuation may be more accurately modeled using time-fractional damping. The modeling and analysis framework employed here extends to that setting as well. For completeness, we provide details in Appendix~\ref{Appendix: Fractional}.
	\end{remark}
	\section{Numerical study} \label{Sec:Numerics}
In this section, we {conduct a further numerical study of the problem. In particular, we wish to explore the empirical behavior of microbubbles. We start by examining the dynamics of a single microbubble with a sinusoidal driving pressure. Afterwards, we will consider the dynamics of microbubbles driven by the Westervelt pressure input. More precisely, we will simulate the Westervelt--Rayleigh--Plesset system with the zero acoustic source term (that is, with $\xi=0$). The two numerical studies will allow us to distinguish the source of nonlinear contributions and comparatively observe the effects of modeling ultrasound on the microbubble dynamics.\footnote{The program code for all simulations in this section is available as an ancillary file from the arXiv page of this paper \hyperlink{https://arxiv.org/abs/2408.06108}{(arXiv:2408.06108)}.} \\ 
\setlength{\belowcaptionskip}{-10pt} 
\indent To ensure accurate and meaningful numerical simulations, we employ parameter values typical for ultrasound contrast imaging below, see, e.g., \cite{cobbold2007foundations}. As one of the most widely used ultrasound contrast agents is SonoVue™ (Bracco SpA), see, e.g., \cite{versluis2020ultrasound}, we choose the microbubble parameter values relevant for it.}
\subsection{Numerical framework for solving Rayleigh--Plesset equations}
We assume $\rho_0 \approx \rho_{l0}$ and focus on two concrete equations described in Section \ref{sec: bubble models} modeling coated and non-coated microbubbles. 
{We employ parameter values relevant for SonoVue™, which has a number concentration of $1 \cdot 10^8$ to $5 \cdot 10^8$ microbubbles/$\si{\milli \liter}$ and the mean diameter of the microbubbles is $2 \, \si{\micro \m}$.} Fixed parameter values are listed in the following table.

\begin{table}[h]
	\begin{center}
		\begin{tabular}{l l l | l l l}
			\hline
			$R_0$ & initial radius  & $2 \, \si{\micro \m}$ & $\kappa$ & adiabatic exponent & $1.4$ \\	
			$\kappa_s$ & shell viscosity & $ 2 \cdot 10^{-6} \, \si{\kilogram / \s}$ &
			$\rho_0$ & mixture mean mass & $1000 \, \si{\kilogram / \m^3}$ \\
			$\mu$ & shear viscosity & $8.9 \, \si{\milli \pascal \, \s}$ &
			$\sigma$ & surface tension & $72.8 \, \si{\milli \newton / \m}$ \\
			$c$ & speed of sound & $1500\,\si{\m / \s}$ & $\chi$ & shell elasticity  & $2 \, \si{\newton / \m}$\\
			$p_v$ & vapor pressure & $2330 \, \si{\pascal}$ &
			$p_{\text{stat}}$ & ambient pressure &  $100 \, \si{\kilo \pascal}$ \\
			\hline
		\end{tabular}
	\end{center}
	\caption{Overview of the parameter values used in the simulations.}
	\vspace{-0.0cm}
\end{table}
As we are dealing with second-order ODEs that are nonlinear and singular, the numerical simulations can be instable, highly fluctuating or produce negative values for the bubble radius. These problems especially arise when the radius becomes very small and consequently the rate of change of the radius is extremely large. For these reasons, we use the fourth-order Runge Kutta scheme combined with the adaptive approach proposed by~\cite{LEE2020593}. The time step in the discrete ODEs is dependent on the size of the bubble radius and chosen according to
\begin{equation} \label{eq: adaptive time stepping}
	\Delta t (t_i) = [R(t_i)]^{\lambda}
\end{equation} 
with a time index $\lambda$. This means that as the value of $\lambda$ increases, the time steps become smaller, leading to longer numerical simulations but higher accuracy. By setting $\lambda=1.75$, we achieve a good balance between computational effort and accuracy, resulting in time steps ranging from $10^{-12} \si{\second}$ to $10^{-9} \si{\second}$. We note that these observations are empirical, as a rigorous study of the used time-stepping method does not appear to be available in the literature at present.

\subsection{Single-microbubble dynamics: Coated vs.\ non-coated} \label{sec: single bubble dynamics}
We first discuss the simulation of the Rayleigh--Plesset equations,  focusing on the dynamics of single microbubble under the influence of a sinusoidal driving pressure. We consider a driving pressure function $p(t) = A \sin( 2 \pi f t) $
where, in this section, $f$ is the driving frequency and $A$ is the amplitude. We present selected settings for various values of $A$ and $f$ in the following.  
Figure \ref{fig: overview equ complex} shows  $R=R(t)$ using coated-bubble equation given in \eqref{RP_shell_1} with varying amplitudes $A = 1, \, 10 \, \si{\mega \pascal}$ and driving frequencies $f = 0.2, \, 0.5 \, \si{\mega \hertz}$.  

\begin{figure}[h]
	\vspace{-0.3cm}
	\centering
	\includegraphics[width=0.85\textwidth]{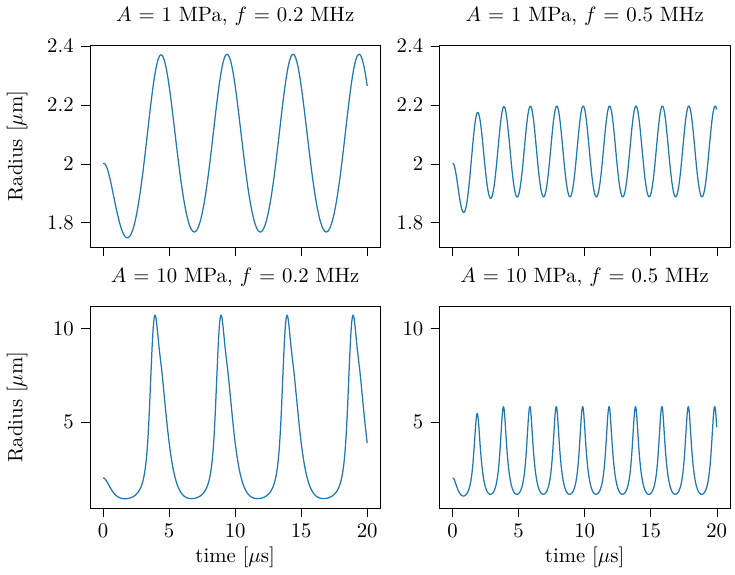}
	\vspace{-0.3cm}
	\caption{Sensitivity of a coated microbubble to driving amplitude and frequency.}
	\label{fig: overview equ complex}
	\vspace{-0.5cm}
\end{figure}
The results in Figure \ref{fig: overview equ complex} clearly illustrate the influence of the driving pressure on the bubbles' radii. At low forcing levels, the microbubbles exhibit almost sinusoidal oscillations with relatively small amplitude. As the forcing pressure increases, the effects of nonlinearity become significantly more pronounced. Specifically, for a fixed frequency, we observe that the curve representing the radius steepens, and the amplitude of the radius increases as the pressure amplitude rises. Conversely, when comparing different frequencies but maintaining the same pressure amplitude, lower frequencies reveal more nonlinear effects, with the amplitude of the radius being higher. This indicates that both the magnitude of the driving pressure and the frequency play crucial roles in dictating the bubble’s dynamic behavior.  

 In diagnostic ultrasonic imaging, the typical frequency range  is $1$ to $10 \, \si{\mega \hertz}$. For the amplitude of the driving pressure, realistic values are between $10$ and $15 \, \si{\mega \pascal}$. Figure \ref{fig: radius equ complex FFT} shows $R=R(t)$ for coated bubbles and the corresponding fast Fourier transform (FFT) - spectra for the amplitude $A = 15 \, \si{\mega \pascal}$ and different driving frequencies $f = 0.5, \, 1, \, 5 \, \si{\mega \hertz}$. To facilitate a thorough comparison, we examine 10 cycles of the curves in time domain and the corresponding frequency spectra. The results indicate that nonlinear effects decrease as the frequency increases. For the smallest frequency of $0.5 \, \si{\mega \hertz}$ the radius curve is very steep and the bubble reaches approximately six times its initial size. In frequency domain the harmonics at multiples of the fundamental frequency are clearly visible and far more intense at a driving frequency of $0.5 \, \si{\mega \hertz}$ compared to higher frequencies of $1$ and $5 \, \si{\mega \hertz}$. This is probably at least partially due to stronger attenuation at higher frequencies, as also visible in the reduced amplitudes (around $10 \, \si{\micro\meter}$ at $0.5 \, \si{\mega\hertz}$ and only around $2 \, \si{\micro\meter}$ at $5 \, \si{\mega\hertz}$). 
 
 \begin{figure}[h]
 	\centering
 	\vspace{-0.3cm}
 	\includegraphics[width=0.85\textwidth]{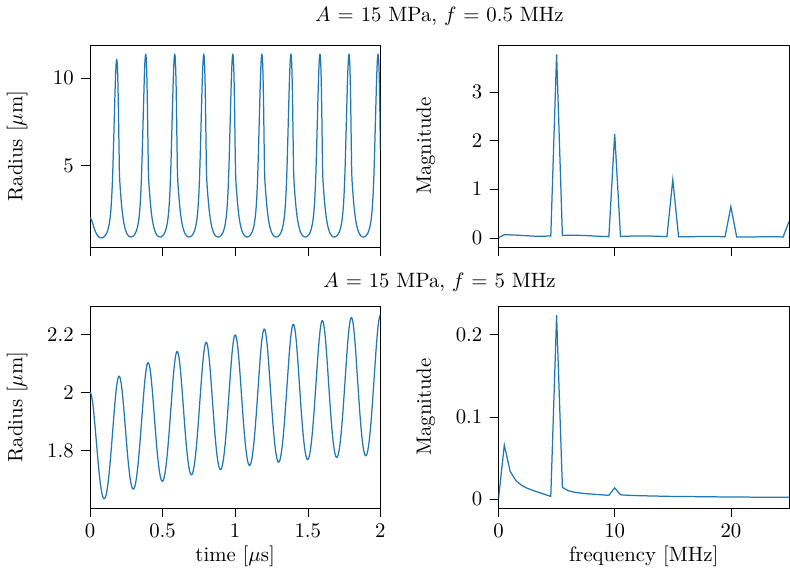}
 	\vspace{-0.3cm}
 	\caption{Behavior of a coated microbubble for high frequencies: Radius-time $R(t)$ curves and the corresponding FFT-spectra. }
 	\label{fig: radius equ complex FFT}
 	\vspace{-0.5cm}
 \end{figure}

Next, we want to compare the radius plots resulting from the Rayleigh--Plesset type equation for coated bubbles, given by equation \eqref{RP_shell_1}, with those from equation \eqref{RP_4} for non-coated bubbles, where the shell terms are neglected. 
\begin{figure}[h]
	\centering
	\includegraphics[width=0.85\textwidth]{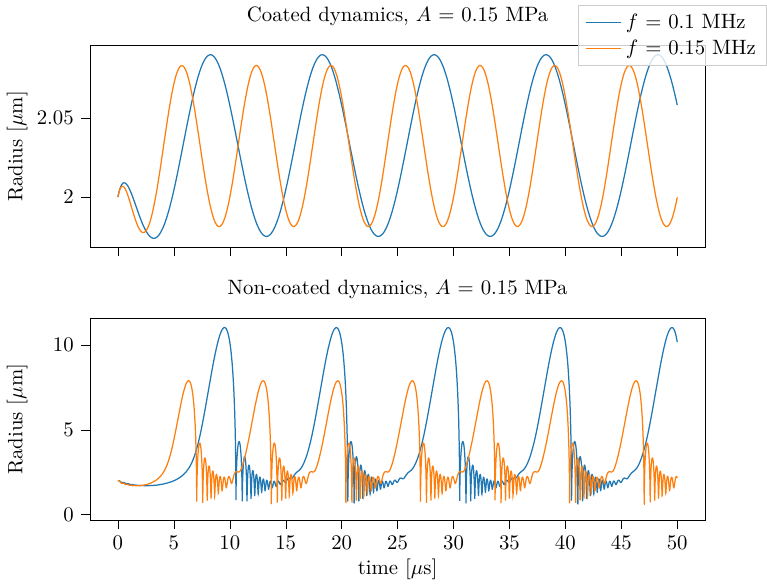}
	\vspace{-0.3cm}
	\caption{Comparison of the dynamics of a coated and non-coated microbubble.}
	\label{fig: comparison equ easy and complex}
	\vspace{-0.5cm}
\end{figure}
For an amplitude of $A = 0.15 \, \si{\mega \pascal}$ and driving frequencies $f = 0.1, \, 0.15 \, \si{\mega \hertz}$,  the first plot at the top in Figure \ref{fig: comparison equ easy and complex} shows the numerical solutions $R=R(t)$ of the ODE \eqref{RP_shell_1} and the second one of equation \eqref{RP_4}. The simulations that take the shell terms into account show a sinusoidal behavior with an amplitude close to the initial radius. The non-coated bubble expands approximately four to six times its initial radius, drops extremely quickly and afterbounces roughly with its eigenfrequency. This clearly illustrates that the shell protects the bubbles and dampens the fluctuations in the bubble radius. \\
\indent To complete our study of single-microbbuble dynamics, we also consider the Rayleigh–Plesset type equation for non-coated bubbles on its own. Given the inherent instability of this model, high amplitudes and frequencies -- typical in ultrasound contrast imaging -- can cause numerical instabilities. When these parameters are increased to more realistic levels the simulation of equation \eqref{RP_4} becomes unstable, resulting in negative radius values. Therefore, we consider driving amplitudes of $A = 0.05, \, 0.15 \, \si{\mega \pascal}$ with a fixed frequency of $f = 0.15 \, \si{\mega \hertz}$. 
\begin{figure}[h]
	\centering
	\includegraphics[width=0.85\textwidth]{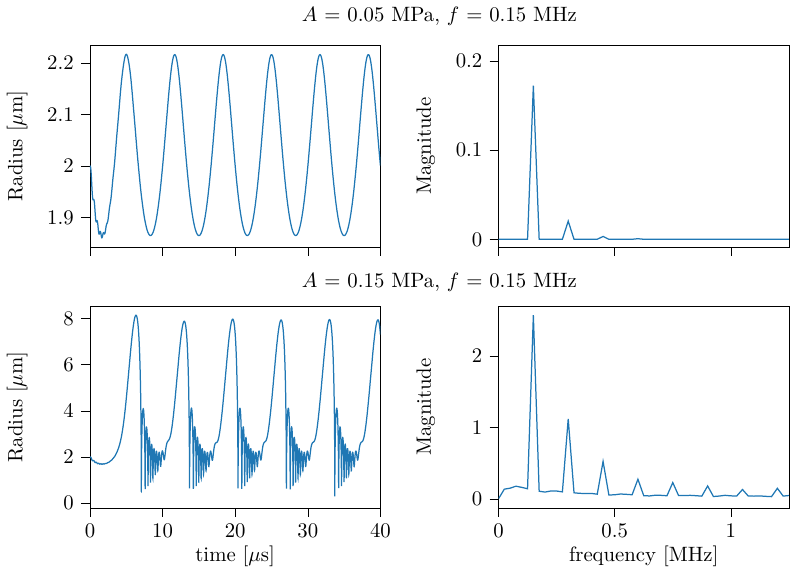}
	\vspace{-0.3cm}
	\caption{Behavior of a non-coated microbubble: Radius-time $R(t)$ curves and the corresponding FFT-spectra.}
	\label{fig: different amplitudes equ easy}
	\vspace{-0.5cm}
\end{figure}
The resulting Radius-time curves $R(t)$ are presented in Figure \ref{fig: different amplitudes equ easy}, illustrating different regimes of bubble oscillation behavior: from sinusoidal patterns to large bubble growth followed by a steep drop and rebounds. As the amplitude increases, the time-domain curves exhibit small oscillations, which can be observed in the frequency domain as higher harmonics rise, indicating stronger nonlinear effects. Note that in Figure \ref{fig: radius equ complex FFT} we have kept the driving amplitude constant while varying the frequencies, whereas in Figure \ref{fig: different amplitudes equ easy}, we have varied the amplitudes with a fixed frequency. This comparison reveals that nonlinear effects in microbubble dynamics intensify as the driving frequency decreases and the driving amplitude increases.

\subsection{The influence of ultrasound on microbubbles}\label{sec:numwest}
We next present numerical results for the Westervelt--Rayleigh--Plesset model. To somewhat simplify the computational complexity, we set the right-hand source term in the Westervelt equation to zero (that is, we take $\xi=0$). {This setting allows solving the wave-ODE system sequentially; that is, we can first solve the PDE and then use it as an input for the ODE. This way, we can keep the time step used in the discretization of the wave equation larger (by several orders of magnitude) compared to the Rayleigh--Plesset equation.}\\
\indent {When it comes to the rigorous numerical analysis of the Westervelt equation, its finite element semi-discretization in space is best understood theoretically. \emph{A priori} analysis of the semi-discrete equation using conforming finite elements under homogeneous Dirichlet conditions can be found in~\cite{nikolic2019priori}. It is known that optimal error of convergence in the $L^2(\Omega)$-based norms is achieved for sufficiently small $h$ and the exact solution; we refer to~\cite[Theorem 6.1]{nikolic2019priori} for details.} To have a more realistic computational setting, we employ Neumann boundary conditions in the simulations and zero pressure data.  To achieve focusing, we construct a rectangular domain with one curved side, where the waves are excited such that they focus at the center of the domain.  The Neumann conditions are given by $\frac{\partial p}{\partial n}= A_p \sin(2 \pi f_p t)$ on the curved part of the boundary of the computational domain, and set to zero elsewhere. The amplitude of the acoustic boundary excitation is taken to be $A_p = 0.1$ MPa/m and the frequency $f_p =15$ kHz. All simulations of Westervelt's equation are performed using FEniCSx v0.7.2, see, e.g., \cite{baratta_2023}, with Gmsh~\cite{gmsh_2009} used for meshing. The simulations are based on using continuous linear finite elements for spatial discretization and a predictor-corrector Newmark scheme for time integration, following the algorithm in~\cite[Ch. 5]{kaltenbacherMathematicsNonlinearAcoustics2015}. The integration parameters in the Newmark scheme are set to $(\gamma, \beta) = (0.7, 0.4)$ and the time step is taken to be $3 \cdot 10^{-6} \si{\second}$. Note that this time step differs in size by {six orders of magnitude} compared to the time step used for the Rayleigh--Plesset ODEs. 


\begin{figure}[h]
	\centering
	\includegraphics[scale=0.35]{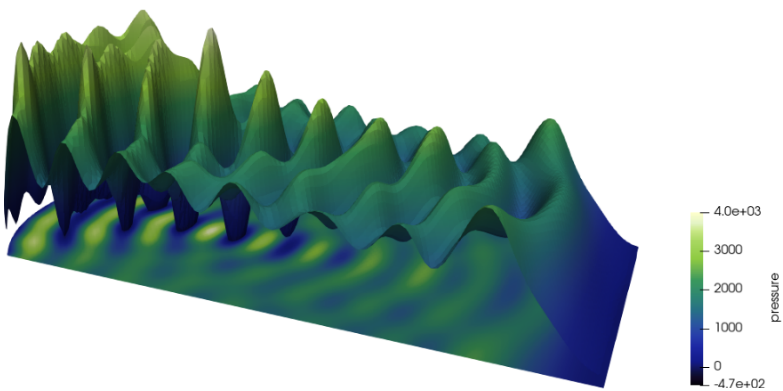}
	\vspace{-0.2cm}
	\caption{Propagation and self focusing of nonlinear pressure waves.}
	\label{fig: west}
	\vspace{-0.5cm}
\end{figure}
Figure \ref{fig: west} shows the pressure field generated from Westervelt's equation. The focal region, the area with the highest peak of the pressure waves, is clearly visible and shows the point of maximum pressure intensity. Additionally, the figure highlights nonlinear effects: the positive peak pressure surpasses the negative values, indicating an asymmetry in the pressure distribution and the side profile of the waveform shows noticeable steepening.

\begin{figure}[h]
	\centering
	\includegraphics[width=0.9\textwidth]{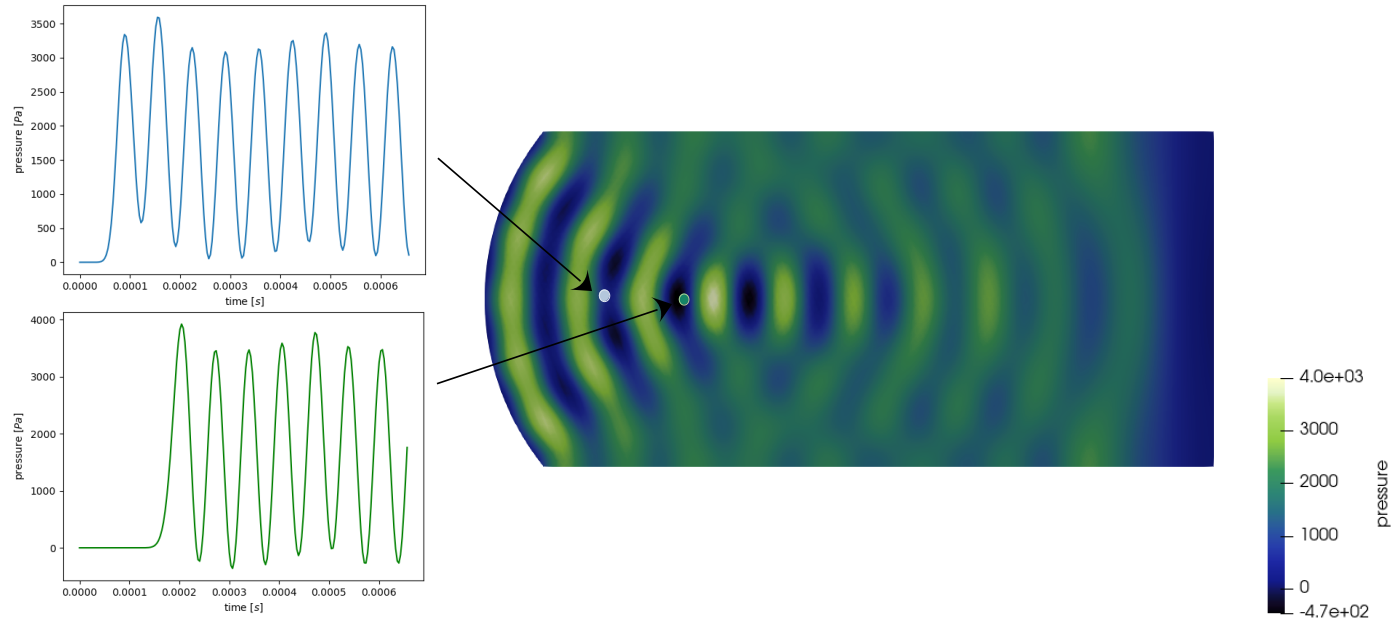}
	\vspace{-0.2cm}
	\caption{Westervelt input for RPE: Acoustic pressure waves over time at two different spatial points obtained using Westervelt's equation.}
	\label{fig: west_overview}
	\vspace{-0.5cm}
\end{figure}
Next, we select two pressure-time curves obtained along the central axis to serve as inputs for the Rayleigh--Plesset-type equations; see~Figure \ref{fig: west_overview}. 
The first curve, depicted in blue, is representative of the pressure close to the excitation source. In contrast, the second curve, shown in green, corresponds to the focal region where the pressure is more intense. As a result, the green curve demonstrates more prominent nonlinear effects compared to the blue one. 
Figure~\ref{fig: west sigma(R)} shows that the amplitude of the green radius-time curve is significantly higher compared to the blue curve. This increase in amplitude is a direct result of the greater intensity of the pressure input represented by the green curve. The corresponding frequency spectra further highlight this effect, revealing that while higher harmonics are not prominently visible, subharmonics seem to appear.\\
\indent When comparing these findings to the results presented in Section \ref{sec: single bubble dynamics}, where a sinusoidal driving pressure was used, it becomes apparent that {the additional nonlinear effects that arise from the pressure itself change the bubble dynamics significantly.} The radius-time curves shown in Figure \ref{fig: west sigma(R)} exhibit greater symmetry with respect to the $x$-axis and demonstrate expanding bubble behavior. 
 This comparison underscores the differences in microbubble dynamics influenced by varying {(Westervelt-based)} pressure inputs.

\begin{figure}[h]
	\centering
	\includegraphics[width=0.9\textwidth]{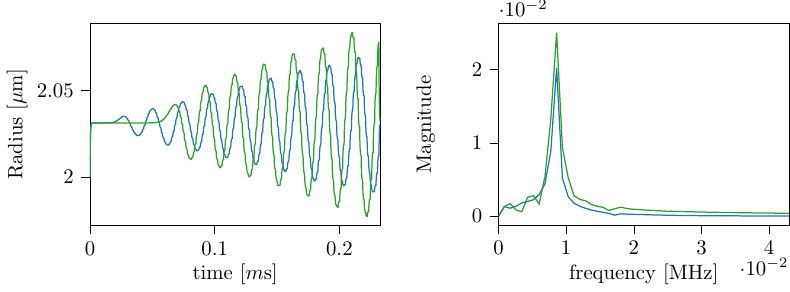}
	\vspace{-0.2cm}
	\caption{Coated microbubbles with Westervelt input: Radius-time $R(t)$ curves and the corresponding FFT-spectra at two different positions in the domain; see also Fig. \ref{fig: west_overview}.}
	\label{fig: west sigma(R)}
	\vspace{-0.5cm}
\end{figure}

We conclude our investigations by examining the problem involving Westervelt's equation in conjunction with the non-coated microbubble dynamics (modeled by \eqref{RP_4}).  Figure \ref{fig: west sigma} reveals that this ODE exhibits greater instability and stronger nonlinear effects. In particular, the frequency domain analysis shows an increase in higher harmonics, a trend that becomes more pronounced with higher pressure input intensities. Additionally, the gradients of the radius-time curves are steeper, the curves display reduced symmetry, and the amplitudes are greater compared to those shown in Figure \ref{fig: west sigma(R)}.

\begin{figure}[h]
	\centering
\includegraphics[width=0.9\textwidth]{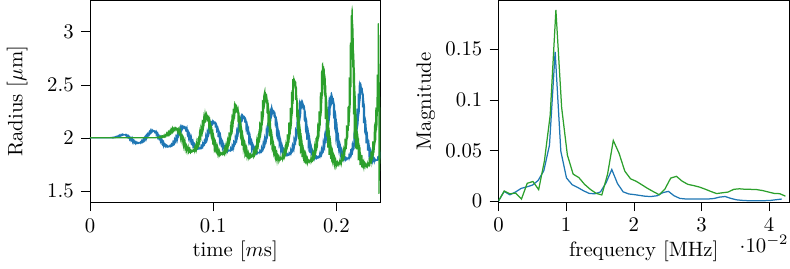}
	\vspace{-0.2cm}
	\caption{Non-coated bubbles with Westervelt input: Radius-time $R(t)$ curves and the corresponding FFT-spectra at two different positions in the domain; see also Fig. \ref{fig: west_overview}.}
	\label{fig: west sigma}
	\vspace{-0.5cm}
\end{figure}

	\section{Outlook}
	In this work, we have explored the mathematical modeling and analysis of ultrasound contrast imaging, with a focus on the nonlinear effects in ultrasound propagation and interactions between microbubbles and pressure waves. 	Adapting the mathematical analysis to the setting of Neumann or absorbing acoustic boundary conditions in future reseach is important for modeling more realistic scenarios, where one should avoid non-physical reflections of sound waves. Furthermore, extending the analysis to address the refined wave-ODE models derived in Section~\ref{Sec:Modeling}, such as \eqref{eq: merged reduced second}, would provide deeper insights into the complex interactions between acoustic waves and microbubble dynamics. From a numerical perspective, addressing the fully coupled problem \eqref{ibvp_fractionalWestRP_general} with $\xi \neq 0$, where both wave propagation and microbubble dynamics are solved simultaneously, would be a significant step forward in capturing the complete range of physical behaviors. 
	This setting introduces additional computational complexity, particularly due to the need for much finer time steps in discretizing the ODE compared to the wave equation, which already need to be relatively small in the nonlinear acoustic setting. Techniques such as multirate time stepping and operator splitting methods could be explored here to manage differing time scales in the wave and ODE dynamics. Furthermore, rigorous numerical analysis of the schemes used in the present work and those developed in the future would provide a deeper understanding of the simulated problems.
	\noindent \subsection*{Acknowledgments} We are grateful to Prof.\ Barbara Kaltenbacher (University of Klagenfurt) for her valuable comments. We thank Prof.\ Martin Verweij (Delft University of Technology) for pointing out references~\cite{matalliotakis2023computation} and~\cite{matalliotakis2024impact} to us. This research was funded in part by the Austrian Science Fund (FWF) [10.55776/DOC78].\\
	\indent  For open-access purposes, the authors have applied a CC BY public copyright license to any author-accepted manuscript version arising from this submission. 
	\begin{appendices} 
	\section{Appendix} \label{Appendix}
	\subsection{Lipschitz continuity results} \label{Appendix: Lipschitz}
	We include here the derivation of estimate \eqref{Lipschitz continuity p} for the pressure field.
	\begin{lemma}\label{Lemma: est diff p}
		Under the assumptions of Proposition~\ref{Prop:WellP_West}, let $\pone$ and $\ptwo$ be the solutions of \eqref{ibvp_fractionalWestRP_general} with the right-hand sides $\fone, \ftwo \in L^2(0,T; \Ltwo)$, respectively. Then
		\[
				\| \pone - \ptwo\|_{C(\Htwo)} \lesssim \|\fone - \ftwo\|_{L^2(\Ltwo)}.
		\]
	\end{lemma}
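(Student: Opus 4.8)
The plan is to show that the difference $p := \pone - \ptwo$ solves a \emph{linear} strongly damped wave equation with bounded, coercive coefficients and a right-hand side controlled by $\|\fone - \ftwo\|_{\LtwoLtwo}$ together with a lower-order ``defect'' term, and then to run the energy argument of \cite[Proposition~1]{kaltenbacher_rundell2021}, closing the estimate with a Gr\"onwall inequality. By Proposition~\ref{Prop:WellP_West} the solutions $\pone = \calS(\fone)$ and $\ptwo = \calS(\ftwo)$ (which share the initial data $(p_0,p_1)$) satisfy $\pone, \ptwo \in \Xp$ with $\|\pone\|_{\Xp}, \|\ptwo\|_{\Xp} \leq \mathcal{C}$ and $1+2k\pone, 1+2k\ptwo \geq \gamma > 0$ a.e.\ in $\Omega\times(0,T)$. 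Using the algebraic identity
\[
(1+2k\pone)\pone_t - (1+2k\ptwo)\ptwo_t = (1+2k\pone)\pt + 2k\,p\,\ptwo_t,
\]
subtracting the two Westervelt equations shows that $p$ solves
\[
\bigl((1+2k\pone)\pt\bigr)_t - c^2\Delta p - b\Delta \pt = (\fone-\ftwo) - \bigl(2k\,p\,\ptwo_t\bigr)_t =: g
\]
with zero initial data $(p,\pt)\vert_{t=0}=(0,0)$ and $p\vert_{\partial\Omega}=0$.

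For the estimate I would proceed exactly as in the well-posedness proof for the strongly damped Westervelt equation: test the equation for $p$ first with $\pt$, and then carry out the higher-order estimate by testing with $-\Delta\pt$ combined with elliptic regularity for $-c^2\Delta$ (with homogeneous Dirichlet data). The coefficient $1+2k\pone$ is bounded and bounded below by $\gamma$, and the strong dissipation $-b\Delta\pt$ furnishes the $\|\nabla\pt\|_{\LtwoLtwo}$- and $\|\Delta\pt\|_{\LtwoLtwo}$-control needed to absorb the top-order terms; since only $k\in\Linf$ is available, the $H^2$-level step must avoid differentiating the coefficient, which is handled precisely as in \cite[Proposition~1]{kaltenbacher_rundell2021}. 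The right-hand side $g$ is fed in as follows: $\fone-\ftwo$ contributes $\|\fone-\ftwo\|_{\LtwoLtwo}$ directly, while the defect term is expanded as $(2k\,p\,\ptwo_t)_t = 2k\,\pt\,\ptwo_t + 2k\,p\,\ptwo_{tt}$ and estimated using $k\in\Linf$, the embedding $\Htwo\hookrightarrow\Linf$ (valid for $d\in\{1,2,3\}$), and the bounds $\ptwo_t\in L^\infty(0,T;\Hone)$, $\ptwo_{tt}\in\LtwoLtwo$ that follow from $\ptwo\in\Xp$. The crucial observation is that $\|\ptwo_t(s)\|_{\Hone}^2$ and $\|\ptwo_{tt}(s)\|_{\Ltwo}^2$ are integrable in $s$ with $L^1(0,T)$-norm $\lesssim \mathcal{C}^2$, so these products must be kept under a time integral paired with the running energy of $p$ rather than with the norm of $p$ that we are trying to bound.

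Collecting everything, I expect to reach an inequality of the form
\[
E(t) \lesssim \|\fone-\ftwo\|_{\LtwoLtwo}^2 + \int_0^t \psi(s)\, E(s) \ds, \qquad E(t) := \|p(t)\|_{\Htwo}^2 + \|\pt(t)\|_{\Hone}^2,
\]
with $\psi\in L^1(0,T)$ and $\|\psi\|_{L^1(0,T)}\lesssim C(T,\mathcal{C})$, so that Gr\"onwall's inequality yields $\sup_{t\in[0,T]} E(t)\lesssim C(T,\mathcal{C})\|\fone-\ftwo\|_{\LtwoLtwo}^2$, and in particular $\|\pone-\ptwo\|_{L^\infty(0,T;\Htwo)} \lesssim \|\fone-\ftwo\|_{\LtwoLtwo}$. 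The passage from $L^\infty(\Htwo)$ to $C(\Htwo)$ is routine: the equation for $p$ gives $\ptt\in\LtwoLtwo$, hence $p\in C([0,T];\Hone)$, and weak continuity into $\Htwo$ together with the energy identity upgrades this to strong continuity in $\Htwo$. I expect the main obstacle to be the careful reproduction of the $H^2$-level energy estimate under the weak assumption $k\in\Linf$ — that is, transcribing the argument of \cite[Proposition~1]{kaltenbacher_rundell2021} while carrying along the extra source $(2k\,p\,\ptwo_t)_t$ — rather than any conceptually new difficulty.
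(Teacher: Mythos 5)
Your proposal follows essentially the same route as the paper's proof: subtract the two equations to obtain a linear strongly damped wave equation for $\op=\pone-\ptwo$ with zero data and the defect term $2k\bigl(\op\,\ptwo_{tt}+\op_t(\pone_t+\ptwo_t)\bigr)$ (equivalently your $(2k\,\op\,\ptwo_t)_t$), estimate it by the energy method of \cite[Proposition 1]{kaltenbacher_rundell2021} using the uniform $\Xp$-bounds on $\pone,\ptwo$, and close with Gr\"onwall. The only cosmetic difference is the multiplier: the paper tests once with $\op_{tt}-\Delta\op$, which directly produces the coercive term $\intt\|\sqrt{1+2k\pone}\,\op_{tt}\|^2_{\Ltwo}\ds$ needed to absorb the cross terms without ever differentiating $k\in\Linf$, whereas your two-step $\op_t$/$-\Delta\op_t$ test must recover the $\|\op_{tt}\|_{L^2(\Ltwo)}$ control separately — a detail resolved in the cited reference, so both variants close the same way.
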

	\begin{proof}
		Thanks to Proposition~\ref{Prop:WellP_West}, $\pone$, $\ptwo \in \Xp$, and
		\begin{equation} \label{bound pone ptwo strong damping}
			\|\pone\|_{\Xp}, \ \|\ptwo\|_{\Xp} \leq \mathcal{C}
		\end{equation}
		for some $\mathcal{C}>0$.  The difference $\op= \pone- \ptwo$ solves
		\begin{equation}
			\begin{aligned}
				(1+2k(x)\pone) \op_{tt}- c^2 \Delta \op - b \Delta  \op_t = \fone -\ftwo - 2k(x) \bigl(\op \ptwo_{tt} + \op_t(\pone+\ptwo)\bigr).
			\end{aligned}
		\end{equation}
		with homogeneous boundary and initial data.	The claim follows by testing this difference equation with $\op_{tt} - \Delta \op \in \LtwoTLtwo$, integrating over $\Omega$ and $(0,t)$, and performing integration by parts in time and space, analogously to the arguments in the proof of contractivity in~\cite[Proposition 1]{kaltenbacher_rundell2021}. In this way, we obtain
		\begin{equation} \label{est op}
			\begin{aligned}
				&\begin{multlined}[t]
					\intt \|\sqrt{1+2k \pone}\op_{tt}\|^2_{\Ltwo}\ds+\negthickspace c^2 \intt \|\Delta \op\|^2_{\Ltwo}\ds+\frac{b}{2}\|\Delta \op(t)\|^2_{\Ltwo}+\frac{b}{2}\|\nabla \op_t\|^2_{\Ltwo}
				\end{multlined}\\
				\leq& \inttO((1+2k\pone) \op_{tt}) \Delta \op \dxs +	c^2\int_0^t \intO \Delta \op\, \op_{tt} \dxs\\
				&+\negthickspace\int_0^t \negthickspace \intO (\fone-\ftwo) (\op_{tt} \negthickspace -\Delta \op) \dxs-2 \negthickspace \int_0^t \negthickspace \intO  \negthickspace k(x) \bigl(\op \ptwo_{tt} \negthickspace+ \op_t(\pone \negthickspace +\ptwo)\bigr)(\op_{tt}\negthickspace-\Delta \op) \dxs.
			\end{aligned}
		\end{equation}
		Indeed, on account of \eqref{bound pone ptwo strong damping}, we have
		\begin{equation}
			\begin{aligned}
				\int_0^t \negthickspace \intO \negthickspace (	(1\negthickspace+\negthickspace2k\pone) \op_{tt}) \Delta \op \dxs \leq&\, \|1\negthickspace+\negthickspace2k \pone \negthinspace \|_{\LinfLinf}\|\op_{tt}\|_{L^2(0,t; \Ltwo)} \|\Delta \op\|_{L^2(0,t; \Ltwo)} \\
				\lesssim&\, \eps \|\op_{tt}\|^2_{L^2(0,t; \Ltwo)} +\|\Delta \op\|^2_{L^2(0,t; \Ltwo)}
			\end{aligned}
		\end{equation}
		for any $\varepsilon>0$. Furthermore,
		\begin{equation}
			\begin{aligned}
				\int_0^t \intO (\fone-\ftwo) (\op_{tt}-\Delta \op) \dxs \leq&\, \begin{multlined}[t] \frac{1}{2\eps} \|\fone-\ftwo\|^2_{L^2(0,t; \Ltwo)}+\eps \|\op_{tt}\|^2_{L^2(0,t; \Ltwo)}\\+\eps \|\Delta \op\|^2_{L^2(0,t; \Ltwo)},
				\end{multlined}
			\end{aligned}
		\end{equation}
		and the other terms can be treated analogously. By choosing a sufficiently small $\varepsilon>0$ and employing these estimates in \eqref{est op}, the $\op_{tt}$ terms on the right-hand sides can be absorbed by the left-hand side, whereas the $\Delta \op$ terms can be subsequently handled using Gr\"onwall's inequality. In this manner, we obtain the claimed estimate.
	\end{proof}
	Finally, we prove here that assumptions \eqref{assumption1 h_0} and \eqref{assumption2 h_0} hold for the right-hand side $\hzero$ in the Rayleigh--Plesset equation given by 
	\begin{equation} \tag{\ref{def hzero}}
		\begin{aligned}
			h_0(R, R_t)=\begin{multlined}[t]   \pb -   \frac{4 \mu}{R}\Rt -\frac{2\sigma(R)}{R} +\ppgn \left(\frac{\Rn}{R} \right)^{3 \kappa}{\left(1- 3 \kappa_0 \frac{\Rt}{c}\right)}  -4 \kappa_s \dfrac{\Rt}{R^2},
			\end{multlined}
		\end{aligned}
	\end{equation}  
	where $\sigma$ is a constant or $\sigma(R)=\chi(\tfrac{R^2}{R_0^2}-1)$ and $\kappa_s$, $\kappa_0 \in \R$, provided $R \in \BR$. 
	\begin{lemma} \label{Lemma: est diff hzero}
		Under the assumptions of Theorem~\ref{Thm:LocalWellp_WestRP}, let $\Rsone$, $\Rstwo \in \BR$. Let $h_0$ be given by \eqref{def hzero} with $\pb$, $\ppgn \in C([0,T]; \Linf)$, $\kappa \geq 1$, $\kappa_0$, $
		\kappa_s$, $\mu \in \R$, and $\Req>0$. Assume $\sigma$ is either a real constant or $\sigma(R)=\chi(\tfrac{R^2}{R_0^2}-1)$ with $
		\chi \in \R$. Then $\hzero$ satisfies \eqref{assumption1 h_0} and \eqref{assumption2 h_0}.
	\end{lemma}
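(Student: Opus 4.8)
The plan is to verify \eqref{assumption1 h_0} and \eqref{assumption2 h_0} by working termwise in \eqref{def hzero}, using as the only inputs the two-sided bound \eqref{boundedness_Rs}, namely $0 < \ulR \leq \Rs(x,t) \leq \olR$ a.e.\ in $\Om \times (0,T)$ with $\ulR = \underline{R}_0 - \varepsilon_0$ and $\olR = \|R_0\|_{\Linf} + \varepsilon_0$, together with the membership bound $\|\Rs\|_{\CLinf} + \|\Rst\|_{\CLinf} \leq m$ for $\Rs \in \BR$. The decisive observation is that every summand of $\hzero$ is a smooth function of $(R, R_t)$ on the set $\{\,\ulR \leq R \leq \olR,\ |R_t| \leq m\,\}$, which stays bounded away from $R = 0$, and hence has bounded value and bounded gradient there; since all the estimates below are carried out pointwise in $x$ and $t$, the $\CLinf$-norms on the left-hand sides of \eqref{assumption1 h_0}--\eqref{assumption2 h_0} are produced automatically by taking suprema.

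For \eqref{assumption1 h_0}, I would bound $\|\pb\|_{\CLinf}$ directly; $\tfrac{4\mu}{\Rs}\Rst$ by $\tfrac{4|\mu|}{\ulR}m$; $4\kappa_s \tfrac{\Rst}{(\Rs)^2}$ by $\tfrac{4|\kappa_s|}{\ulR^2}m$; the gas-pressure contribution $\ppgn\,(\Rn/\Rs)^{3\kappa}(1 - 3\kappa_0 \Rst/c)$ by $\|\ppgn\|_{\CLinf}\,(\Rn/\ulR)^{3\kappa}\,(1 + 3|\kappa_0|m/c)$; and $2\sigma(\Rs)/\Rs$ by $2|\sigma|/\ulR$ if $\sigma$ is constant, respectively by $\tfrac{2|\chi|}{\ulR}\bigl(\olR^2/\Req^2 + 1\bigr)$ if $\sigma$ is given by \eqref{def_sigma}. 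Summing these gives a bound of the advertised form $C(m,\varepsilon_0)$, with the fixed physical parameters $\mu,\kappa,\kappa_0,\kappa_s,c,\chi,\Req$ and the profiles $\pb,\ppgn$ absorbed into the constant.

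For \eqref{assumption2 h_0}, I would fix $\Rsone, \Rstwo \in \BR$, set $\tRs = \Rsone - \Rstwo$ so that the $\pb$ term cancels, and reduce each remaining difference by repeated use of the elementary identity $a_1 b_1 - a_2 b_2 = a_1(b_1 - b_2) + b_2(a_1 - a_2)$. Thus $\tfrac{1}{\Rsone} - \tfrac{1}{\Rstwo} = -\tRs/(\Rsone\Rstwo)$ is bounded pointwise by $\ulR^{-2}|\tRs|$; the terms $\tfrac{\Rsone_t}{\Rsone} - \tfrac{\Rstwo_t}{\Rstwo}$ and $\tfrac{\Rsone_t}{(\Rsone)^2} - \tfrac{\Rstwo_t}{(\Rstwo)^2}$ are bounded by $C(\ulR,m)\,(|\tRs| + |\tRs_t|)$; and the negative power is handled by the mean value theorem applied pointwise in $x$ and $t$, using that $R \mapsto R^{-3\kappa}$ has derivative of modulus at most $3\kappa\,\ulR^{-3\kappa-1}$ on $[\ulR,\olR]$, so that $|(\Rsone)^{-3\kappa} - (\Rstwo)^{-3\kappa}| \leq 3\kappa\,\ulR^{-3\kappa-1}|\tRs|$. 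Combining this with the Lipschitz bound for the factor $1 - 3\kappa_0 R_t/c$ via the product identity controls the full gas-pressure term, and if $\sigma$ is given by \eqref{def_sigma} then $2\sigma(R)/R = 2\chi\,(R/\Req^2 - 1/R)$, whose difference is a combination of pieces already treated. Collecting all contributions and taking suprema yields $\|\hzero(\Rsone,\Rsone_t) - \hzero(\Rstwo,\Rstwo_t)\|_{\CLinf} \lesssim \|\Rsone - \Rstwo\|_{\ConeLinf}$, with implied constant depending on $\ulR,\olR,m$ and the fixed parameters.

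I do not expect a genuine obstacle here: the whole argument is pointwise algebra plus the a priori bounds \eqref{boundedness_Rs}, with no differential inequalities or functional-analytic machinery required. The one point that needs care is that the gas-pressure contribution contains the negative power $R^{-3\kappa}$, which is smooth with bounded derivatives only on sets bounded away from $R = 0$; this is exactly what the $\varepsilon_0$-condition built into the definition \eqref{ball_R} of $\BR$, together with $R_0 \geq \underline{R}_0 > 0$, secures, so one must be sure to invoke \eqref{boundedness_Rs} before differentiating. The counterparts \eqref{assumption1 h_0 frak}--\eqref{assumption2 h_0 frak} for $\Rs \in \frakBR$ follow by the same termwise bookkeeping, now additionally using that $\Htwo$ is a Banach algebra for $d \leq 3$ and that multiplication by an $\Htwo$-function is bounded on $\Hone$, so that every reciprocal, power and product appearing above remains in $\Htwo \hookrightarrow \Hone$ with the required norm bounds.
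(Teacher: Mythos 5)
Your proposal is correct and follows essentially the same route as the paper's proof: termwise, pointwise estimates on the differences, using only the uniform two-sided bound $0<\ulR\le \Rs\le \olR$ from \eqref{boundedness_Rs} and the $C^1$-bound $m$ built into $\BR$, with the $\pb$ contribution cancelling and the product identity $a_1b_1-a_2b_2=a_1(b_1-b_2)+b_2(a_1-a_2)$ doing the bookkeeping. The only substantive divergence is the gas-pressure term: the paper factors the difference of the $3\kappa$-th powers algebraically (an identity that is only literally valid for integer $\kappa$), whereas you apply the mean value theorem to $R\mapsto R^{-3\kappa}$ on $[\ulR,\olR]$, which is cleaner and valid for every real $\kappa\ge 1$ — arguably an improvement rather than a gap.
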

	\begin{proof}
		We provide the proof of \eqref{assumption2 h_0}. We rewrite the difference of $h_0$ terms as
		\begin{equation}\label{diff h terms}
			\begin{aligned}
				&\hzero(\Rsone, \Rsone_t)-\hzero(\Rstwo, \Rstwo_t) \\
				=& - \tfrac{1}{\Rsone \Rstwo}\pb(\Rsone-\Rstwo) -4 \mu   \frac{\tR^*_t \Rstwo+ \Rstwo_t\tR^*}{\Rsone \Rstwo}\\
				&-2\frac{\sigma(\Rsone)\tR^*+(\sigma(\Rsone)-\sigma(\Rstwo))\Rstwo)}{\Rsone \Rstwo} +\ppgn \negthinspace\left(\tfrac{\Rn}{\Rsone} \right)^{3 \kappa }\left(1\negthinspace-3\kappa_0 \tfrac{\Rsone}{c}\right)\\
				&-\ppgn \negthinspace\left(\tfrac{\Rn}{\Rstwo} \right)^{3 \kappa}\negthickspace\left(1\negthinspace-3\kappa_0 \tfrac{\Rstwo}{c}\right) \negthickspace-4 \kappa_s \tfrac{1}{(\Rsone)^2}\tRs \negthickspace-\negthinspace4\kappa_s \Rstwo_t \negthickspace\frac{\tRs}{(\Rsone \Rstwo)^2}.
			\end{aligned}
		\end{equation}
		Note that since $\Rsone \in \BR$, we have
		\begin{equation}
			\|\sigma(\Rsone)\|_{\CLinf} \lesssim 1+ \|\Rsone\|^2_{\CLinf} \lesssim 1+m^2.
		\end{equation}
		Using
		\begin{equation}
			\begin{aligned}
				\left(\negthinspace\frac{1}{\Rsone}\negthinspace \right)^{3 \kappa} \negthickspace- \left(\negthinspace\frac{1}{\Rstwo} \negthinspace\right)^{3 \kappa} \negthickspace=\left(\frac{1}{\Rsone \Rstwo}\right)^{3 \kappa} \negthinspace\tR^\kappa  \left((\Rsone)^2 \negthickspace+ \Rsone \Rstwo \negthickspace + (\Rstwo)^2\right)^\kappa
			\end{aligned}
		\end{equation}
		as well as
		\[
		\|\sigma(\Rsone)-\sigma(\Rstwo)\|_{\CLinf} \negthickspace \leq \negthinspace \frac{|\chi|}{\Rn^2}\|\tR\|_{\CLinf}(\negthinspace\|\Rsone\|_{\CLinf}+\|\Rstwo\|_{\CLinf} \negthinspace)
		\]
		and the fact that the radii are uniformly bounded:
		\begin{equation}
			\begin{aligned}
				&0<\ulR \leq	\Rsone,\, \Rstwo \leq \olR, \quad \|\Rsone\|_{\ConeLinf}+\|\Rstwo\|_{\ConeLinf} \lesssim m, 
			\end{aligned}
		\end{equation}
		from \eqref{diff h terms} we obtain
		\[
		\begin{aligned}
			&\|\hzero(\Rsone, \Rsone_t)-\hzero(\Rstwo, \Rstwo_t)\|_{\CLinf} \\
			\lesssim&\,  \|\Rsone-\Rstwo\|_{\CLinf}+\|\Rsone_t-\Rstwo_t\|_{\CLinf},
		\end{aligned}
		\]
		as claimed.
	\end{proof}

	\subsection{Extension of the modeling framework to time-fractional acoustic attenuation} \label{Appendix: Fractional}
	We can modify the Westervelt--Rayleigh--Plesset system given in \eqref{ibvp_fractionalWestRP_general} in a straightforward manner to incorporate time-fractional acoustic attenuation instead. Time-fractional damping is known to match well the observed acoustic attenuation in complex media, such as soft tissue; see, e.g.,~\cite{prieur2011nonlinear}. In the following, we discuss how this model can be derived and how the analysis can be adapted accordingly. We have also performed simulations similar to those described in Section \ref{sec:numwest} and observed comparable effects; therefore, we do not present them here.
	
	\subsubsection{Derivation of the system} 
	We next adapt the derived models by introducing fractional derivatives following \cite{prieur2011nonlinear}. Therefore, instead of \eqref{eq: conservation of momentum} we make use of the fractional integral generalization of the conservation of momentum given by
	\begin{equation} \label{eq: frac momentum equ}
		\rho \left( \bfu_t \negthinspace + \negthinspace \left( \bfu \cdot \nabla\right)  \bfu \right) \negthinspace + \negthinspace \nabla p = \mu \tau_1^{\alpha_1 -1} I^{1-\alpha_1}\Delta \bfu \negthinspace + \negthinspace \left( \frac{\mu}{3} \negthinspace + \negthinspace \mu_b \right) \tau_1^{\alpha_1 -1} I^{1-\alpha_1} \left( \nabla \left( \nabla \cdot \bfu \right)\right) 
	\end{equation}
	for $0 < \alpha_1 \leq 1$, where $\tau_1$ is a time constant characteristic of the creep time of the medium. Further, $I^{1-\alpha_1}(\cdot)$ represents the fractional integral of order $1-\alpha_1$ that is defined as follows:
	\begin{equation}
		I^{y}[f(t)] = \frac{1}{\Gamma(y)} \int_0^t (t-s)^{y-1} f(s) \ds \qquad \text{ for } 0<y.
	\end{equation}
	The fractional integral appearing in \eqref{eq: frac momentum equ} originates from using a fractional Kelvin--Voigt model for the stress tensor; we refer to~\cite{prieur2011nonlinear} for details. We also make use of the fractional form of the generalized state equation given by
	\begin{equation} \label{eq: frac state equation}
		\rho_l' = \frac{p'}{c^2} - \frac{1}{\rho_{l0} c^4 } \frac{B}{2A} p'^2- \frac{\gamma \tau_2^{\alpha_2-2}}{\rho_{l0} c^4 } \left( \frac{1}{c_v} - \frac{1}{c_p} \right) D_t^{\alpha_2-1} p'
	\end{equation}
	for $1 <\alpha_2 \leq 2$, where $ D^{\alpha_2}_t(\cdot)$ stands for the Caputo--Djrbashian fractional time derivative of order $\alpha_2$ that results from having a fractional entropy equation in the medium; see~\cite{prieur2011nonlinear}. It is defined as follows:
	\begin{equation} \label{def Caputo}
		D_t^{y} f(t)= \frac{1}{\Gamma(1-r)} \int_0^t (t-s)^{-r} D_t^n f(s) \ds,
	\end{equation}
	where $0 \leq n-1 < y<n$, $r= y - n+1$, $n \in \N$; see~\cite[Sec.\ 1]{kubica2020time}. \\
	\indent	Equations \eqref{eq: bfmass}, \eqref{eq: conservation of mass}, \eqref{eq: conservation of bubbles}, and \eqref{eq: continuum density l} together with the fractional equations \eqref{eq: frac momentum equ} and \eqref{eq: frac state equation} and an ODE for the bubble radius are a set of seven equations for the seven fluctuations as before. We can then follow the derivation analogously to above by  making use of Lighthill's scheme and the substitution corollary combining these equations. \\
	\indent With the approximations above and 
	\begin{align}
		I^{1-\alpha_1}\left(  \nabla \left( \nabla \cdot \bfu\right)\right)  &= I^{1-\alpha_1} \bigl( \nabla \cdot \nabla \bfu\bigr)  = I^{1-\alpha_1}  \bigl( - \tfrac{1}{n_0} \nabla n_t' \bigr) =  - \tfrac{1}{n_0} D_t^{\alpha_1} \nabla n',
	\end{align}
	where we have made use of the property
	\begin{equation}
		D^{\alpha_1}_t [I^{\alpha_2}](\cdot) = \begin{cases} D^{\alpha_1-\alpha_2}_t (\cdot) \quad & \text{ if } 0<\alpha_2<\alpha_1, \\
			I^{\alpha_2-\alpha_1}(\cdot)  & \text{ if } 0<\alpha_1<\alpha_2, \end{cases}
	\end{equation}
	we arrive at 
	\begin{equation} \label{eq: frac momentum3}
		\rho_0 \bfu_t \negthinspace + \nabla p' \negthinspace  =  - \tfrac{1}{1-\bmass_0}\bmass' \nabla p' \negthinspace +\tfrac{1-\bmass_0}{2 \rho_0 c^2} \nabla p'^2 - \tfrac{\rho_0}{2} \nabla \left( \bfu \cdot \bfu \right) -	\tau_1^{\alpha_1 -1}  \tfrac{\left( \frac{4}{3} \mu + \mu_b \right)}{n_0} D_t^{\alpha_1} \nabla n'.
	\end{equation}
	Analogously to deriving equation \eqref{eq: merged3}, we obtain 
	\begin{equation} \label{eq: merged3 frac}
		\begin{aligned}
			\Delta p' -& \frac{1-\bmass_0}{c^2} p'_{tt} +  \frac{\rho_0}{1-\bmass_0} \bmass'_{tt} +  \frac{\tilde{\beta}}{\rho_0 c^4}  ( p'^2)_{tt} +\frac{\tau^{\alpha-1}\tilde{ b}}{c^2} D^{\alpha}_t \Delta p' \\
			=& \begin{multlined}[t]  -  \frac{1}{1-\bmass_0} \bmass' \Delta p' -  \frac{\tau^{\alpha-1}\left( \frac{4}{3} \mu + \mu_b \right)}{n_0} D_t^{\alpha}\Delta n' - \frac{\rho_0}{2(1-\bmass_0)^2} (\bmass'^2)_{tt} \\
				+\frac{\rho_0 \bmass_0}{c^2} \left( \bfu\cdot \bfu \right)_t  - \frac{\rho_0}{c^2} (\bfu \cdot \bfu)_{tt}  \end{multlined}
		\end{aligned}
	\end{equation}
	with $\tilde{b}=\frac{\gamma (1-\bmass_0)^2}{\rho_0} \left( \frac{1}{c_v} - \frac{1}{c_p} \right)$ and $\tilde{\beta}=(1-\bmass_0)^2 \frac{B}{2A}- \frac{(1-\bmass_0)\bmass_0}{2}$, where we have linked the fractional derivatives with $\tau = \tau_1 = \tau_2$ by setting $\alpha = \alpha_1=\alpha_2-1$ with $0 < \alpha \leq 1$ since $\alpha_1=1$, $\alpha_2=2$ leads to \eqref{eq: merged3}. 
	
	We can analogously reduce equation \eqref{eq: merged3 frac} for different cases, as done previously. Here, we simply state the Westervelt equation with time-fractional attenuation that is linear in $v'$:
	\begin{align} \label{eq: merged4 frac}
		\frac{1}{c^2 }p'_{tt} - \Delta p' -  \frac{\tau^{\alpha-1} b}{c^2 } D_t^{\alpha} \Delta p' - \frac{\beta}{\rho_0 c^4} (p'^2)_{tt}=   \rho_0 n_0 v'_{tt}
	\end{align}
	with $0 < \alpha \leq 1$, and $b=\frac{\gamma}{\rho_0} \left( \frac{1}{c_v} - \frac{1}{c_p} \right)$ and $\beta=\frac{B}{2A}+1$ as before.
	
	\subsubsection{Analysis of the system} 
	
	In the following, we consider the Westervelt--Rayleigh--Plesset system with time-fractional acoustic attenuation given by
	\begin{equation} \label{ibvp_fractionalWestRP}
		\left \{ \begin{aligned}
			&((1+ 2k(x)p) \pt)_t - c^2 \Delta p -b \tau^{\alpha-1}\Delta \Dtalpha p= \xi (R^3)_{tt} \quad \text{in } \Omega \times (0,T), \\[1mm]
			&(p, p_t)_{ \vert t=0} = (p_0, p_1), \quad p_{\vert \partial \Omega} =0,\\[1mm]
			& 		\rho_0 \left [ R \Rtt+ \frac32 \Rt^2 \right] \begin{multlined}[t]=h_0(R, \Rt)-p   \qquad \text{in } \Omega \times (0,T), \end{multlined} \\[1mm]
			&  (R, \Rt)_{ \vert t=0} = (R_0, R_1),
		\end{aligned} \right.
	\end{equation}
	with $\xi \in \R$ (corresponding to $\frac43 \pi c^2 \eta$ in the derivation) and $\alpha \in (0,1)$. Next, we wish to adapt the arguments for the Westervelt equation with strong acoustic damping to analyze \eqref{ibvp_fractionalWestRP}. Without loss of generality, we set the relaxation parameter to $\tau=1$. We recall first a well-posedness result from~\cite{ kaltenbacher2022inverse} on the Westervelt equation in non-bubbly media with time-fractional damping. As the one cannot exploit much of acoustic dissipation in this setting, more smoothness is needed from the data and the variable coefficient $k$ compared to Proposition~\ref{Prop:WellP_West}. 
	
	\subsection*{Notation} We use the notation $\Honethree =\,	\{ p \in H^3(\Om): p \vert_{\partial \Omega} = \Delta p \vert_{\partial \Omega} =0\}$. 	 
	
	\begin{proposition} [see Theorem 3.1 in~\cite{kaltenbacher2022inverse}]\label{Prop:WellP_fWest}
		Assume that $\Omega \subset \R^d$, $d \in \{1, 2,3\}$ is a bounded and $C^{2,1}$-regular domain. 	Let $c$, $b > 0$, and $\alpha \in (0,1)$. Furthermore, let $k \in W^{1, \infty}(\Om) \cap W^{2,4}(\Omega)$, $(p_0, p_1) \in   \Honethree \times \Honetwo, \text{ and } \ f \in L^2(0,T; \Honetwo).$
		There exists data size $\tdelta=\tdelta(T)>0$,  such that if
		\begin{equation}\label{datasmallness_fWestervelt}
			\|p_0\|_{\Hthree}+\|p_1\|_{\Htwo} + \|f\|_{L^2(\Htwo)} \leq \tdelta,
		\end{equation}
		then there is a unique solution of 
		\begin{equation} \label{IBVP_West}
			\left \{ \begin{aligned}
				&((1+ 2k(x)p) \pt)_t - c^2 \Delta p - b \Delta \Dtalpha p = f(x,t) \quad \text{in } \Omega \times (0,T), \\
				&p_{\vert \partial \Omega} =0, \quad  (p, p_t)_{ \vert t=0} = (p_0, p_1)
			\end{aligned} \right.
		\end{equation}
		in
		\begin{equation} \label{def_frakXp}
			\begin{aligned}
				\mathfrak{X}_p=  L^\infty(0,T; \Honethree) \cap W^{1, \infty}(0,T; \Honetwo) \cap H^2(0,T; \Honezero)
			\end{aligned}
		\end{equation}
		such that $1+ 2k p \geq \gamma >0 \ \text{ in }\ \Omega \times (0,T)$ for some $\gamma>0$.
		Furthermore, the solution satisfies the following bound:
		\begin{equation}
			\begin{aligned}
				\|p\|^2_{\frakXp} \lesssim	C(T) \left(\|p_0\|^2_{\Hthree}+\|p_1\|^2_{\Htwo} + \|f\|^2_{L^2(\Htwo)} \right).
			\end{aligned}
		\end{equation}
	\end{proposition}
	Under the assumptions of Proposition~\ref{Prop:WellP_fWest}, we can similarly to before introduce the mapping $\frakS: L^2(0,T; \Honetwo) \rightarrow \frakXp$, such that  $\frakS(f) = p$. Here, however Lipschitz continuity can only be established in a norm lower than that corresponding to the solution space. Let $\fone$, $\ftwo \in  L^2(0,T; \Honetwo)$ and denote $\pone = \frakS(\fone)$ and $\ptwo = \frakS(\ftwo)$. Then it can be shown analogously to the case with strong acoustic damping in \ref{Lemma: est diff p} that
	\begin{equation} \label{Lipschitz continuity fractional West}
		\begin{aligned}
			\| \pone-\ptwo\|_{C^1(\Ltwo)}+	\| \pone-\ptwo\|_{C(\Hone)} \lesssim \|\fone-\ftwo\|_{L^2(\Ltwo)}.
		\end{aligned}
	\end{equation}
	\indent To analyze the Westervelt--Rayleigh--Plesset system with time-fractional acoustic attenuation, we introduce the fixed-point mapping $	\frakT: \frakBR \ni \Rs \mapsto R$,	which maps $R^*$ taken from the set
	\begin{equation} \label{ball_fractional_R}
		\begin{aligned}
			\frakBR \negmedspace= \negmedspace\bigl\{ \negmedspace \Rs \negthickspace\in \negmedspace C^{2}(0,T; \Honetwo)\negmedspace: &\|\Rs_{tt}\|_{C(\Htwo)} \leq M, \\[1mm]
			&\|\Rs\|_{C(\Htwo)}+ \|\Rst\|_{C(\Htwo)} \leq m, \\[1mm]
			&\|\Rs \negthickspace- \negthinspace R_0\|_{C([0,T]; \Linf)} \negthinspace \leq \negmedspace \varepsilon_0, (\Rs \negthinspace, \Rst)_{t=0}\negmedspace = \negmedspace(R_0, R_1)  \bigr \}
		\end{aligned}
	\end{equation}
	to the solution of
	\begin{equation} \label{linearized_fractionalsystem}
		\left \{ \begin{aligned}
			& \Rtt = 	h(\Rs, \Rst, \frakS(f(\Rs))), \\
			&(R, \Rt) \vert_{t=0} = (R_0, R_1).
		\end{aligned} \right.
	\end{equation}
	Compared to the setting with strong damping, the main difference is the spatial regularity $\Htwo$ in \eqref{ball_fractional_R}, which originates from the need to have the acoustic right-hand side in $
	L^2(0,T; \Honetwo)$ as opposed to $\LtwoTLtwo$. We assume that $h$ is given by \eqref{def h}, where $\hzero$ satisfies the following assumptions. For any $\Rs \in \frakBR$,  
	\begin{equation}\label{assumption1 h_0 frak}
		\|\hzero(\Rs, \Rst)\|_{C(\Htwo)} \leq C(m, \varepsilon_0). 
	\end{equation}
	Furthermore, for any $\Rsone$, $\Rstwo \in \frakBR$,
	\begin{equation} \label{assumption2 h_0 frak}
		\begin{aligned}
			\|\hzero(\Rsone\negthickspace, \Rsone_t)\negthinspace - \negthinspace\hzero(\Rstwo \negthickspace, \Rstwo_t)\|_{C(\Hone)} \negthickspace \lesssim \|\Rsone\negthickspace-\Rstwo\|_{C^1(\Hone)}.
		\end{aligned}
	\end{equation} 
	We then have the following analogous result to that of Theorem~\ref{Thm:LocalWellp_WestRP}.
	\begin{theorem} \label{Thm:LocalWellp_fractionalWestRP}
		Assume that $\Omega \subset \R^d$, $d \in \{2,3\}$ is a bounded and $C^{2,1}$-regular domain. Let $c$, $b>0$, $\rho_0>0$, $\xi \in \R$, $\alpha \in (0,1)$, and $k\in W^{1, \infty}(\Om) \cap W^{2,4}(\Om)$, and let
		\begin{equation}
			(p_0, p_1) \in \Honethree \times \Honetwo, \quad (R_0, R_1) \in \Honetwo \times \Honetwo.
		\end{equation}
		Furthermore, let $\hzero$ satisfy \eqref{assumption1 h_0 frak} and \eqref{assumption2 h_0 frak}. Then there exist pressure data size $\tdelta_p>0$, bubble data size $\tdelta_R$, and time $\tilde{T}$, such that if
		\begin{equation}
			\|p_0\|_{\Hthree}+\|p_1\|_{\Htwo} \leq \tdelta_p, \quad 	\|R_0\|_{\Htwo}+\|R_1\|_{\Htwo} \leq \tdelta_R, \quad \text{and } \ T \leq \tilde{T},
		\end{equation}
		then there is a unique $(p, R) \in \frakXp \times \frakBR$ that solves \eqref{ibvp_fractionalWestRP_general} with $\calA p = - b \Delta \Dt^\alpha p$, where the space $\frakXp$ is defined in \eqref{def_frakXp}.
	\end{theorem}
	\begin{proof}
		The proof follows analogously to the proof of Theorem~\ref{Thm:LocalWellp_WestRP}. For $\Rs \in \frakBR$, the smallness condition in \eqref{datasmallness_fWestervelt} is now replaced by 
		\begin{equation} \label{smallness_condition_fractionalproof}
			\begin{aligned}
				\|p_0\|_{\Hthree}+\|p_1\|_{\Htwo} +\xi \| ((\Rs)^3)_{tt}\|_{L^2(\Htwo)} \leq \tdelta
			\end{aligned}
		\end{equation}
		with $\tdelta$ set by the smallness condition in \eqref{datasmallness_fWestervelt}. Since
		\begin{equation}
			\begin{aligned}
				&\| ((\Rs)^3)_{tt}\|_{L^2(\Htwo)} \\
				\leq&\, 3\|\Rs\|^2_{L^\infty(\Htwo)}\|\Rstt\|_{L^2(\Htwo)}+ 6 \|\Rs\|_{L^\infty(\Htwo)}\|\Rst\|_{L^\infty(\Htwo)}\|\Rst\|_{L^2(\Htwo)}  \\
				\leq&\, 3m^2 \sqrt{T} M+ 6m^3 \sqrt{T},
			\end{aligned}
		\end{equation}
		condition \eqref{smallness_condition_fractionalproof} is fulfilled for small enough $\tdeltap$ and $T$, similarly to before. The rest of the arguments for proving that $\frakT$ is a self-mapping follow in a same manner. When proving strict contractivity of $\frakT$, here we can only exploit the Lipschitz continuity of the pressure field $p$ in the sense of \eqref{Lipschitz continuity fractional West}. Thus, we can use the following bound:
		\begin{equation} \label{contractivity_est fract}
			\begin{aligned}
				\|\tR\|_{C^2(\Hone)} \negthickspace
				\lesssim \negthinspace\| h(\Rsone\negthinspace, \Rsone_t\negthinspace, \calS(f(\Rsone)))\negthinspace-h(\Rstwo\negthinspace, \Rstwo_t\negthinspace \calS(f(\Rstwo)))\|_{C(\Hone)}
			\end{aligned}
		\end{equation}
		together with \eqref{Lipschitz continuity fractional West} to prove that $\frakT$ is strictly contractive in the norm in \\
		\noindent $C^2([0,T]; \Honezero)$, as opposed to $C^2([0,T]; \Honetwo)$. The rest of the arguments follow analogously to Theorem~\ref{Thm:LocalWellp_WestRP} and we thus omit them here.
	\end{proof}
	
\end{appendices}

	\bibliographystyle{siamplain}
	\bibliography{references}
\end{document}